\newtheorem{thm}{Theorem}
\newtheorem{lem}[thm]{Lemma}
\newtheorem{cor}{Corollary}
\newtheorem{ob}{Observation}
\newtheorem{prop}{Proposition}
\newcommand{\qed}{$\Box$}
\newcommand{\Eh}{{Edge-hitter }}
\newcommand{\w}{{\rm w}}
\newcommand{\smallqed}{{\tiny ($\Box$)}}
\newcommand{\TR}[1]{\mbox{$\tau(#1)$}}
\newcommand{\St}{{Staller }}
\newcommand{\dsum}[1]{\sum_{w\in N(#1)} d_H(w)}
\def \nW {W_{_H}}
\def \nB {B_{_H}}
\def \nGr {G_{_H}}
\def \nH {n_{_H}}
\def \mH {m_{_H}}
\def \nHp {n_{_{H'}}}
\def \mHp {m_{_{H'}}}
\newenvironment{unnumbered}[1]{\trivlist \item [\hskip \labelsep {\bf
#1}]\ignorespaces\it}{\endtrivlist}
\newcommand{\1}{ \vspace{0.1cm} }
\newcommand{\cH}{{\cal H}}
\newcommand{\dstart}{\tau_g}
\newcommand{\sstart}{\tau_g^\prime}
\begin{document}

\title{Bounds on the Game Transversal Number
in Hypergraphs}

\author{$^{1,2}$Csilla Bujt\'{a}s,
$^3$Michael A. Henning,
 and $^{1,2}$Zsolt Tuza
\\ \\
$^1$Department of Computer Science and Systems Technology \\
University of Pannonia \\
H-8200 Veszpr\'{e}m, Egyetem u.\ 10, Hungary\\
\small \tt Email: bujtas@dcs.uni-pannon.hu, \enskip
  tuza@dcs.uni-pannon.hu \\
\\
$^2$Alfr\'ed R\'enyi Institute of Mathematics \\
       Hungarian Academy of Sciences \\
H-1053 Budapest, Re\'altanoda u.\ 13-15, Hungary \\
\\
$^3$Department of Mathematics \\
University of Johannesburg \\
Auckland Park, 2006 South Africa\\
\small \tt Email: mahenning@uj.ac.za  \\
}

\date{}
\maketitle

\begin{abstract}
Let $H = (V,E)$ be a hypergraph with vertex set $V$ and edge set $E$
of order $\nH = |V|$ and size $\mH = |E|$.  A transversal in $H$ is
a subset of vertices in $H$ that has a nonempty intersection with
every edge of $H$. A vertex hits an edge if it belongs to that edge.
The transversal game played on $H$ involves of two players,
\emph{Edge-hitter} and \emph{Staller}, who take turns choosing a
vertex from $H$. Each vertex chosen must hit at least one edge not
hit by the vertices previously chosen. The game ends when the set of
vertices chosen becomes a transversal in $H$. Edge-hitter wishes to
minimize the number of vertices chosen in the game, while Staller wishes
to maximize it. The \emph{game transversal number}, $\tau_g(H)$, of
$H$ is the number of vertices chosen when Edge-hitter starts the
game and both players play optimally.
We compare the game transversal number of a hypergraph
 with its transversal number, and also
 present an important fact concerning the monotonicity of $\tau_g$,
 that we call the Transversal Continuation Principle.
%
It is known that if $H$ is a hypergraph with all edges of size at
least~$2$,  and $H$ is not a $4$-cycle, then $\tau_g(H) \le
\frac{4}{11}(\nH+\mH)$;  and if $H$ is a (loopless) graph, then
$\tau_g(H) \le \frac{1}{3}(\nH + \mH + 1)$. We prove that if $H$ is
a $3$-uniform hypergraph, then  $\tau_g(H) \le \frac{5}{16}(\nH +
\mH)$, and if $H$ is  $4$-uniform, then $\tau_g(H) \le
\frac{71}{252}(\nH + \mH)$.
\end{abstract}

{\small \textbf{Keywords:} Vertex cover; Transversal; Transversal game; Game transversal number; Hypergraph.} \\
\indent {\small \textbf{AMS subject classification:} 05C65, 05C69}

\section{Introduction}

In this paper, we continue the study of the transversal game in
hypergraphs which was first  investigated  in~\cite{BuHeTu15+}.
  The results obtained there implied the proof of the
$\frac{3}{4}$-Game Total Domination Conjecture, which was
posted by Henning,  Klav\v{z}ar and Rall~\cite{hkr-2015+}, over the
class of graphs with minimum degree at least~$2$.

Hypergraphs are systems of sets which are conceived as natural
extensions of graphs.  A \emph{hypergraph} $H = (V(H),E(H))$ is a
finite set $V(H)$ of elements, called \emph{vertices}, together with
a finite multiset $E(H)$ of nonempty subsets of $V(H)$, called
\emph{hyperedges} or simply \emph{edges}. If the hypergraph $H$ is
clear from the context, we may write $V = V(H)$ and $E = E(H)$. We
shall use the notation $\nH =|V(H)|$ and $\mH=|E(H)|$, and sometimes
just $n$ and $m$ without subscript if the actual $H$ need not be
emphasized, to denote the {order} and the {size} of $H$,
respectively.  We say that two edges in $H$ \emph{overlap} if
they intersect in at least two vertices. A   hypergraph is
\emph{linear} if it has no overlapping edges.

A $k$-\emph{edge} in $H$ is an edge of cardinality~$k$.  The
hypergraph $H$ is said to be $k$-\emph{uniform} if every edge of $H$
is a $k$-edge. Every loopless graph is a $2$-uniform hypergraph.
Thus graphs are special hypergraphs. The \emph{degree} of a vertex
$v$ in $H$, denoted by $d_H(v)$,
 is the number of edges of $H$ which contain $v$.
 The maximum degree among the vertices of $H$ is denoted by $\Delta(H)$.

Two vertices $x$ and $y$ of $H$ are \emph{adjacent} if there is an
edge $e$  of $H$ such that $\{x,y\}\subseteq e$. The
\emph{neighborhood} of a vertex $v$ in $H$, denoted $N_H(v)$ or
simply $N(v)$ if $H$ is clear from the context, is the set of all
vertices different from $v$ that are adjacent to $v$. A vertex in
$N(v)$ is a \emph{neighbor} of $v$.

A subset $T$ of vertices in a hypergraph $H$ is a \emph{transversal}
(also called  \emph{hitting set} or \emph{vertex cover} or
\emph{blocking set} in many papers) if $T$ has a nonempty
intersection with every edge of $H$. A vertex \emph{hits} or
\emph{covers} an edge if it belongs to that edge.  The
\emph{transversal number} $\TR{H}$ of $H$ is the minimum size of a
transversal in $H$.  In hypergraph theory the concept of transversal
is  fundamental and well studied. The major monograph \cite{Berge89}
of hypergraph theory gives a detailed introduction to this topic. We
refer to~\cite{BuHeTu12,BuHeTuYe14,DoHe14,HeLo12,HeLo14,HeYe13,
HeYe13b,LoWa13}
 for recent results and further references.

\paragraph{The Game Transversal Number.} The transversal game
belongs to the   growing family of \emph{competitive optimization}
graph and hypergraph games. Competitive optimization variants of
coloring~\cite{Bod91,DZ99,Gar81,KK09,KT94,TZ15},
list-colouring~\cite{BST07,Sch09,Zhu09}, matching~\cite{CKOW13},
domination~\cite{BKR10,KWZ-2013}, total
domination~\cite{hkr-2015,hkr-2015+}, disjoint
domination~\cite{BuTu15}, Ramsey theory~\cite{BGKMSW11,GHK04,GKP08},
and more~\cite{BKP14} have been extensively investigated.

The transversal game played on a hypergraph $H$ involves two
players, \emph{Edge-hitter} and \emph{Staller}, who take turns
choosing a vertex from $H$. Each vertex chosen must hit at least one
edge not hit by the vertices previously chosen. We call such a
chosen vertex a \emph{legal move} in the transversal game. The game
ends when the set of vertices chosen becomes a transversal in $H$.
Edge-hitter wishes to end the game with the smallest possible
  number of vertices
chosen, and Staller wishes to end the game with as many vertices
chosen as possible. The \emph{game transversal number} (resp.\
\emph{Staller-start game transversal number}), $\tau_g(H)$ (resp.\
$\tau_g'(H)$), of $H$ is the number of vertices
chosen when Edge-hitter (resp.\ Staller) starts the game and both
players play  optimally.

A \emph{partially covered hypergraph} is a hypergraph  together with
a declaration that some edges are already covered; that is, they
need not be covered in the rest of the game.
 Once an edge has been covered, it plays no role
in the remainder of the game and can be deleted from the partially
covered hypergraph, as can all isolated vertices.
   Therefore, after those deletions we obtain a hypergraph
    being equivalent, from the transversal game viewpoint,
    to the partially covered hypergraph
    from which it has been derived;
     we call it a \emph{residual hypergraph}.
  We will also say that the original hypergraph $H$,
before any move has been made in the game, is a residual
  (and also partially covered)
 hypergraph.


Given a hypergraph $H$ and a subset $S$ of edges of $H$,  we denote
by $H|S$ the residual hypergraph\footnote{In the context of games
 we prefer to use the notation $H|S$, although its edge set coincides
  with that of the hypergraph denoted by $H-S$ in many
   hypergraph-theoretic papers.}
   in which the edges contained in $S$ do not appear anymore.
 We use $\dstart(H|S)$ (resp.\ $\sstart(H|S)$)
to denote the number of turns remaining in the transversal game on
$H|S$ under optimal play when \Eh (resp.\ Staller) has the next
turn.

 We will use the standard notation $[k] = \{1,\ldots,k\}$.

\section{Known Results}

Let $H_1$ be the hypergraph with vertex set $V(H_1) =
\{x_1,x_2,x_3,y_1,y_2,y_4\}$  and edge set $E(H_1) =
\{\{x_1,x_2,x_3\},  \{y_1,y_2,y_3\}, \{x_1,y_1\}, \{x_2,y_2\},
\{x_3,y_3\}\}$. For $k \ge 1$, let $H_k$ consist of $k$
vertex-disjoint copies of $H_1$, and let $\cH = \{H_k \colon \, k
\ge 1\}$. The hypergraph $H_3 \in \cH$ is illustrated in
Figure~\ref{f:H3}.

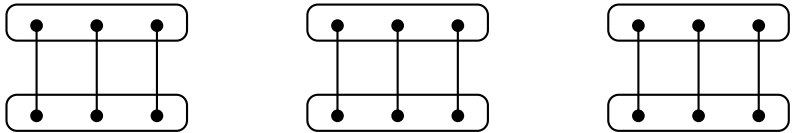
\begin{figure}[htb]
\begin{center}

\begin{tikzpicture}[scale=.8,style=thick,x=1cm,y=1cm]
\def\vr{2.5pt} 
\path (0,0) coordinate (x1);
\path (1,0) coordinate (x2);
\path (2,0) coordinate (x3);
\path (0,1.5) coordinate (y1);
\path (1,1.5) coordinate (y2);
\path (2,1.5) coordinate (y3);
%
\draw (x1) -- (y1);
\draw (x2) -- (y2);
\draw (x3) -- (y3);
\draw (x1) [fill=black] circle (\vr);
\draw (x2) [fill=black] circle (\vr);
\draw (x3) [fill=black] circle (\vr);
\draw (y1) [fill=black] circle (\vr);
\draw (y2) [fill=black] circle (\vr);
\draw (y3) [fill=black] circle (\vr);
\draw [rounded corners] (-0.5,-0.25)
rectangle (2.5,0.35) node [black,right] {}; \draw [rounded corners] (-0.5,1.25) rectangle (2.5,1.85) node [black,right] {}; %
\path (5,0) coordinate (x1);
\path (6,0) coordinate (x2);
\path (7,0) coordinate (x3);
\path (5,1.5) coordinate (y1);
\path (6,1.5) coordinate (y2);
\path (7,1.5) coordinate (y3);
%
\draw (x1) -- (y1);
\draw (x2) -- (y2);
\draw (x3) -- (y3);
\draw (x1) [fill=black] circle (\vr);
\draw (x2) [fill=black] circle (\vr);
\draw (x3) [fill=black] circle (\vr);
\draw (y1) [fill=black] circle (\vr);
\draw (y2) [fill=black] circle (\vr);
\draw (y3) [fill=black] circle (\vr);
\draw [rounded corners] (4.5,-0.25)
rectangle (7.5,0.35) node [black,right] {}; \draw [rounded corners] (4.5,1.25) rectangle (7.5,1.85) node [black,right] {}; %
%
\path (10,0) coordinate (x1);
\path (11,0) coordinate (x2);
\path (12,0) coordinate (x3);
\path (10,1.5) coordinate (y1);
\path (11,1.5) coordinate (y2);
\path (12,1.5) coordinate (y3);
%
\draw (x1) -- (y1);
\draw (x2) -- (y2);
\draw (x3) -- (y3);
\draw (x1) [fill=black] circle (\vr);
\draw (x2) [fill=black] circle (\vr);
\draw (x3) [fill=black] circle (\vr);
\draw (y1) [fill=black] circle (\vr);
\draw (y2) [fill=black] circle (\vr);
\draw (y3) [fill=black] circle (\vr);
\draw [rounded corners] (9.5,-0.25)
rectangle (12.5,0.35) node [black,right] {}; \draw [rounded corners] (9.5,1.25) rectangle (12.5,1.85) node [black,right] {}; %

\end{tikzpicture}
\end{center}
\vskip -0.25cm \caption{The hypergraph $H_3$ from the family~$\cH$.}
\label{f:H3}
\end{figure}

The following upper bound on the game transversal number of a hypergraph is established
  in~\cite{BuHeTu15+}.

\begin{thm}{\rm (\cite{BuHeTu15+})}
If $H$ is a hypergraph with all edges of size at least~$2$, and $H \ncong C_4$, then $\tau_g(H) \le \frac{4}{11}(\nH+\mH)$, with equality if and only if $H\in \cH$.
\label{thm1}
\end{thm}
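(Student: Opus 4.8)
The plan is to establish the bound by induction, driven by an amortized weight (potential) function on residual hypergraphs. Rewriting the target as $11\,\dstart(H)\le 4(\nH+\mH)$, the game must spend at least $\frac{11}{4}=2.75$ units of $\nH+\mH$ per move. A naive induction fails because a \St move can be very inefficient: choosing a residual degree-$1$ vertex of a large edge covers one edge and isolates only that vertex, spending just two units per move. Hence \Eh must bank savings on its own moves, and the whole difficulty concentrates in residual hypergraphs of maximum degree $2$ --- the regime that also contains $C_4$. To make this precise I would assign to each uncovered edge $e$ a weight $f(|e|)$ and to each vertex $v$ a weight depending on its residual degree, setting $w(H)=\sum_{e}f(|e|)+\sum_{v}g(d_H(v))$, with the coefficients calibrated so that $w(H)\le\frac{4}{11}(\nH+\mH)$ for every admissible residual hypergraph.

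The engine would be two one-round lemmas. The first, a \St lemma, states that every legal move lowers $w$ by at least a fixed positive amount; this is guaranteed by the calibration of $w$, since every move covers at least one edge and so removes positive edge-weight, and the coefficients are set so that the induced changes in vertex weights cannot cancel this. The second, an \Eh lemma, is the substantive one: from any nonempty residual hypergraph, \Eh has a legal move such that, for every subsequent \St response, the two moves together lower $w$ by at least $2$. Invoking the Transversal Continuation Principle to supply the monotonicity of $\dstart$ and $\sstart$ under passage to residual sub-hypergraphs --- so that after each pair of moves the game may be handed back to \Eh without loss --- these two lemmas yield the inductive inequality $\dstart(H)\le w(H)$ (proved simultaneously with a companion bound for $\sstart$), after which $w(H)\le\frac{4}{11}(\nH+\mH)$ completes the estimate.

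The move prescribed by the \Eh lemma would be chosen greedily from the local structure: take a vertex of maximum residual degree, breaking ties toward one incident to a smallest edge, so that covering its edges both deletes many edges and isolates several vertices. When the maximum degree is at least $3$ the two-move drop of $2$ is easy; the work lies in the maximum-degree-$2$ regime, where the analysis splits into cases according to the sizes of the edges at the chosen vertex and the degrees of their vertices. The main obstacle is exactly this calibration: choosing $f$ and $g$ so that the \Eh lemma survives the worst local configurations --- clusters of size-$2$ edges, and large edges that \St can cover through a degree-$1$ vertex --- while never breaching the global ceiling $w\le\frac{4}{11}(\nH+\mH)$. The $4$-cycle is the single configuration at which these demands are irreconcilable (there the forced game length $3$ exceeds $\frac{32}{11}$), which is precisely why $C_4$ is excluded; it, together with the remaining small hypergraphs, would be cleared as base cases.

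The equality characterization would then fall out of the discharging bookkeeping. If $\dstart(H)=\frac{4}{11}(\nH+\mH)$, every inequality used along the optimal line of \Eh must be tight. Tightness in the ceiling pins down the admissible edge sizes and degrees, while tightness in the \Eh lemma forces each round to strip off exactly the gadget $H_1$ --- two size-$3$ edges joined by a perfect matching of three size-$2$ edges --- so that $H$ decomposes into vertex-disjoint copies of $H_1$, i.e.\ $H\in\cH$. The converse, that every member of $\cH$ meets the bound, is checked directly, completing the proof.
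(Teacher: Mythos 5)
A preliminary remark: the paper does not prove Theorem~\ref{thm1} at all; it is imported from \cite{BuHeTu15+}. So your proposal can only be measured against the method of that reference, which is indeed the weight-function (``target'') technique that the present paper itself deploys for Theorems~\ref{3unif} and~\ref{4unif}: a potential $\w(H)$ assembled from edge weights and degree-dependent vertex weights, calibrated so that $\w(H)\le\frac{4}{11}(\nH+\mH)$, together with per-move decrease estimates and the Continuation Principle to justify handing the game back to \Eh after each round. At the level of architecture your plan is the right one, and your diagnosis of why $C_4$ must be excluded ($\tau_g(C_4)=3>\frac{32}{11}$) is correct.

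The genuine gap is your claim that two one-round lemmas carry the induction, i.e.\ that from \emph{every} nonempty residual hypergraph \Eh has a single move after which any \St reply completes a normalized drop of at least~$2$. Because the bound is attained on $\cH$, any admissible calibration is exactly consumed in the tight regime (maximum degree~$2$, linear components, isolated $2$-edges), and there the first move of \Eh in a fresh $2$-regular component is structurally cheaper than his later ones --- it has no low-degree neighbors yet --- while \St can answer with her minimum. The paper's own analogous proof shows this is not a removable nuisance: in Theorem~\ref{3unif}, the first move in a $2$-regular component yields only $56$ against a per-round target of $96$ with Staller minimum $32$, so $56+32<96$, and the deficit is repaid only by a component-long, multi-round strategy closed off with a global counting argument (the divisibility contradiction of Claim~\ref{3unif}.M), plus a separate analysis of odd-length termination, where a final lone move of \Eh need not by itself meet the per-move target. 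Your framework has no mechanism for this carry-over; the Continuation Principle lets the game be handed back each round but does not manufacture the missing weight. Relatedly, the equality characterization will not simply ``fall out'' of tightness bookkeeping: on the extremal gadget the raw $(\nH+\mH)$-drops of consecutive rounds are unequal (e.g.\ $5$ then $6$ along a natural optimal line on $H_1$), so per-round tightness lives only in the redistributed weights, and deducing that exact tightness forces a vertex-disjoint decomposition into copies of $H_1$ --- as well as exhibiting a Staller strategy forcing $4k$ moves on $H_k$ for the converse --- requires dedicated structural arguments. These constitute the substance of \cite{BuHeTu15+} and are deferred rather than supplied in your sketch.
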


As a special case of more general results due to Tuza~\cite{Tu90}
and Chv\'{a}tal and McDiarmid~\cite{ChMc92},  if $H$ is a simple
graph, then $\tau(H) \le \frac{1}{3}(\nH + \mH)$. This bound is
almost true for the game transversal number, as proved
 in~\cite{BuHeTu15+}.

\begin{thm}{\rm (\cite{BuHeTu15+})}
If $H$ is a $2$-uniform hypergraph, then $\tau_g(H) \le \frac{1}{3}(\nH+\mH+1)$.
\label{2uniform}
\end{thm}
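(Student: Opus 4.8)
The plan is to track the single quantity $\nH+\mH$ as the game proceeds and to bound the number of moves by controlling how fast this quantity drops. When a player selects a vertex $v$ in the current residual graph, the edges incident to $v$ are covered and deleted, $v$ itself is deleted, and every neighbour of $v$ that becomes isolated is deleted as well; hence $\nH+\mH$ decreases by exactly $\delta(v)=1+d_H(v)+\ell(v)$, where $\ell(v)$ is the number of neighbours of $v$ of degree~$1$. Since a legal move hits at least one uncovered edge, $\delta(v)\ge 2$ always, and a move with $\delta(v)\ge 3$ is available from any nonempty residual graph (if $v$ is a leaf whose neighbour $u$ survives, play $u$ instead). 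I would phrase the target as a bound on the \emph{deficit} $3\dstart(H)-(\nH+\mH)$, the claim being that it is at most~$1$.

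I would prove the two statements
\[
\dstart(H)\le \frac13(\nH+\mH+1) \qquad\text{and}\qquad \sstart(H)\le \frac13(\nH+\mH+2)
\]
simultaneously by strong induction on $\nH+\mH$. The Staller-start bound follows from the Edge-hitter-start bound almost for free: whatever vertex Staller selects, the residual graph $H'$ satisfies $n_{H'}+m_{H'}\le \nH+\mH-2$, so by induction $\sstart(H)=1+\dstart(H')\le 1+\frac13(\nH+\mH-1)=\frac13(\nH+\mH+2)$. For the Edge-hitter-start bound the induction closes provided Edge-hitter can find a move with $\delta(v)\ge 4$: then $H''$ has $n_{H''}+m_{H''}\le \nH+\mH-4$ and the induction gives $\dstart(H)\le 1+\sstart(H'')\le 1+\frac13(\nH+\mH-2)=\frac13(\nH+\mH+1)$. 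Such a move exists (one needs $d_H(v)+\ell(v)\ge 3$) as soon as some vertex has degree at least~$3$, or some vertex of degree~$2$ is adjacent to a leaf; the latter happens in every path on at least three vertices.

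Consequently the only residual graphs for which this argument stalls are the disjoint unions of single edges ($K_2$) and cycles, where every move has $\delta=3$. This exceptional class is where I expect the real work to lie, and it is where the additive constant is forced: one checks $\dstart(C_4)=3=\frac13(\nH+\mH+1)$, so the $+1$ cannot be removed. The naive idea of having Edge-hitter repeatedly break cycles fails, because a cycle-breaking move has deficit~$0$ while Staller can then take a leaf of the resulting path at deficit~$+1$; played greedily these deficits would accumulate well beyond~$1$. The point that must be established is that Staller cannot actually profit: each leaf-move of Staller can be answered by a genuinely efficient ($\delta\ge 4$) reply of Edge-hitter, and the game-sum interaction across components keeps the deficits from adding up. A direct game-tree computation already illustrates this, giving $\dstart(C_4\cup C_4)=5$ rather than $6$, precisely because Edge-hitter interleaves clearing the short residual edges with breaking the second cycle instead of breaking both cycles first.

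I would therefore handle the union-of-edges-and-cycles case by a separate, more delicate induction inside that class, in which Edge-hitter makes a $\delta=3$ move (breaking a shortest cycle, or clearing a $K_2$) but is credited with a sharpened Staller-start estimate on the resulting position, which now contains an efficiently attackable path; the Transversal Continuation Principle is the tool I would use to compare these positions and to move between the Edge-hitter-start and Staller-start values. The main obstacle, and the step I expect to demand the most care, is exactly this amortized accounting on unions of cycles: proving that over the whole game each leaf-move of Staller is paid back by an Edge-hitter reply, so that at most one unit of deficit survives, with equality realised by $C_4$.
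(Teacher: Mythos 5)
First, a point of orientation: the paper does not prove Theorem~\ref{2uniform} at all --- it is quoted from \cite{BuHeTu15+} as a known result --- so your attempt can only be judged on its own merits and against the technique the present paper uses for the analogous ($3$- and $4$-uniform) theorems. Your outer induction is sound: the bookkeeping $\delta(v)=1+d_H(v)+\ell(v)$, the pair of statements $\dstart(H)\le\frac13(\nH+\mH+1)$ and $\sstart(H)\le\frac13(\nH+\mH+2)$, the observation that the induction closes whenever Edge-hitter has a move with $\delta\ge 4$, and the identification of the exceptional residual class (disjoint unions of $K_2$'s and cycles, with $C_4$ realizing equality) are all correct. But the decisive case --- the game on unions of cycles --- is exactly where you stop: you promise ``a separate, more delicate induction'' and name it as the step demanding the most care, without carrying it out. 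As it stands this is a proof plan whose hardest step is missing.

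Moreover, the amortization mechanism you sketch for that step is false as stated. You claim ``each leaf-move of Staller can be answered by a genuinely efficient ($\delta\ge 4$) reply of Edge-hitter,'' but consider your own example $C_4\cup C_4$: Edge-hitter breaks one cycle, reaching $P_3\cup C_4$; Staller takes a leaf of the $P_3$ ($\delta=2$), leaving $K_2\cup C_4$, a position in which \emph{every} legal move has $\delta=3$ --- no $\delta\ge4$ reply exists. Edge-hitter still achieves $5$ moves there, but only by clearing the $K_2$, letting Staller break the second cycle herself, and finishing with the $P_3$-center (a $\delta=5$ move covering two edges at once); the repayment arrives two moves later and from a different component, so a uniform weight of $1$ per vertex and edge cannot localize it, and a move-by-move pairing argument collapses. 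This is precisely the obstacle the paper's technique is built to circumvent in Theorems~\ref{3unif} and~\ref{4unif}: degree-$1$ (blue) vertices are given strictly smaller weight than degree-$2$ (green) ones (Table~1), so a Staller leaf-move automatically costs enough on its own, and the $2$-regular endgame is settled by the explicit strategy ``play a green vertex with the most blue neighbors'' together with a counting/divisibility contradiction (Claims~\ref{3unif}.F--M). To repair your argument you would need either such a degree-sensitive potential in the $2$-uniform setting or a complete analysis of the game on disjoint unions of paths, cycles and $K_2$'s; without one of these, the crucial case is both unproven and underpinned by a mechanism that concrete positions refute.
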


\section{Main Results}

Since the transversal game played in a hypergraph $H$ ends when the
set  of vertices chosen becomes a transversal in $H$,
 it is clear  that
$\tau(H) \le \dstart(H)$ and $\tau(H) \le \sstart(H)$. If \Eh fixes
a minimum transversal set, $T$, in $H$ and adopts the strategy in
each of his turns to play a vertex from $T$ if possible, then he
guarantees that the game ends in no more than $2\tau(H)-1$ moves in
the Edge-hitter-start transversal game and in no more than
$2\tau(H)$ moves in the Staller-start transversal game. We state
this fact formally as follows.

\begin{ob}
For every hypergraph $H$, the following holds. \\
\indent {\rm (a)} $\tau(H) \le \dstart(H) \le 2\tau(H)-1 $. \\
\indent {\rm (b)} $\tau(H) \le \sstart(H) \le 2\tau(H)$.
\label{obser1}
\end{ob}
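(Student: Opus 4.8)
The plan is to prove the two inequalities in each part separately; the lower bounds are immediate, while the upper bounds follow from an explicit strategy for Edge-hitter. For the lower bounds, I would note that every legal move hits an edge not previously covered, so the vertices chosen during the game are pairwise distinct and, by definition, the game terminates exactly when the chosen set becomes a transversal of $H$. Hence, under any play and regardless of who moves first, the number of moves equals the size of some transversal, which is at least $\tau(H)$. This gives $\tau(H) \le \dstart(H)$ and $\tau(H) \le \sstart(H)$ simultaneously.

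For the upper bounds I would fix a minimum transversal $T$, so that $|T| = \tau(H)$, and prescribe for Edge-hitter the rule: on each of his turns, play any not-yet-chosen vertex of $T$. The key step is to verify that this rule is always executable. Whenever Edge-hitter is to move and the game has not ended, there is an uncovered edge $e$; since $T$ meets every edge, some $t \in T \cap e$ exists, and because $e$ is uncovered no chosen vertex lies in $e$, so in particular $t$ is unchosen and hits $e$. Thus $t$ is a legal move, and consequently every move made by Edge-hitter is a fresh vertex of $T$; his moves therefore use pairwise distinct vertices of $T$.

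It remains to count. Since Edge-hitter consumes a distinct vertex of $T$ on each of his turns, he makes at most $|T| = \tau(H)$ moves. The one point needing slight care is to see that the game is already over immediately after Edge-hitter's $\tau(H)$-th move, should it reach that far. Indeed, just before that move Edge-hitter has chosen $\tau(H)-1$ distinct vertices of $T$; as the game has not ended, some vertex of $T$ is still unchosen, and since the vertices of $T$ chosen so far number at most $\tau(H)-1$ in total, Staller must have chosen none of them. Hence this $\tau(H)$-th move of Edge-hitter completes all of $T$ and ends the game. In the Edge-hitter-start game this move is move number $2\tau(H)-1$, giving $\dstart(H) \le 2\tau(H)-1$; in the Staller-start game it is move number $2\tau(H)$, giving $\sstart(H) \le 2\tau(H)$. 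I anticipate no genuine obstacle: the argument is elementary, and the only subtlety is confirming that any vertices of $T$ played by Staller can merely shorten the game rather than disrupt Edge-hitter's strategy.
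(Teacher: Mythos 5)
Your proposal is correct and takes essentially the same route as the paper: the paper justifies the observation in one sentence by exactly this strategy---Edge-hitter fixes a minimum transversal $T$ and plays a vertex from $T$ on each of his turns---and you have simply filled in the details (the legality of a $T$-move via an uncovered edge, and the count showing the game ends no later than Edge-hitter's $\tau(H)$-th move, i.e.\ move $2\tau(H)-1$ or $2\tau(H)$ depending on who starts). One small wording point: your rule should say ``play a \emph{legal} not-yet-chosen vertex of $T$'' rather than ``any'' such vertex, since an arbitrary unchosen vertex of $T$ might hit only covered edges---but your own verification paragraph constructs precisely such a legal vertex $t \in T \cap e$ for an uncovered edge $e$, so the argument is sound as given.
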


 It is easy to see that the equalities
$\tau(H) = \dstart(H) = \sstart(H)$ hold if $H$  is the disjoint
union of complete $k$-uniform hypergraphs. Further,
$\tau_g(H)=2\tau(H)-1$ and $\sstart(H)=2\tau(H)$ are valid if
$\tau(H) = 1$ and $H$ contains at least two different edges. In
Section~\ref{S:family} we present an infinite family of hypergraphs
$H$ with $\tau_g(H) = 2\tau(H)-1$ and $\tau_g'(H)= 2\tau(H)$. These
show that the lower and upper bounds given in Observation
\ref{obser1} cannot be improved even when $\tau(H)$ is large.

We next present a  simple but fundamental and widely applicable
 lemma, named the \emph{Transversal Continuation
Principle}, that expresses the monotonicity of $\dstart$ and
 $\sstart$ with respect to subhypergraphs.
 Its proof is given in Section~\ref{S:TCP}.

\begin{lem}
\label{lem:continuation}
{\rm (Transversal Continuation Principle)}
Let $H$ be a hypergraph and let $A,B \subseteq E(H)$.  If $B \subseteq A$, then $\dstart(H|A) \le \dstart(H|B)$ and $\sstart(H|A) \le \sstart(H|B)$.
\end{lem}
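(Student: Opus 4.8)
The plan is to prove both inequalities \emph{together}, and alongside them an auxiliary near-symmetry inequality, by a single induction on the number of uncovered edges of the less-covered residual hypergraph. The guiding intuition is that declaring more edges covered can only put whoever is to move ``further ahead,'' so neither $\dstart$ nor $\sstart$ can increase. It is convenient to write, for a vertex $v$, $D(v)=\{e\in E(H):v\in e\}$ for the set of all edges through $v$; then a move $v$ is legal in $H|S$ iff $D(v)\not\subseteq S$, playing it produces the residual $H|(S\cup D(v))$, and the one-move recursions read $\dstart(H|S)=1+\min_v\sstart(H|(S\cup D(v)))$ and $\sstart(H|S)=1+\max_v\dstart(H|(S\cup D(v)))$ over legal $v$ (with both values $0$ once every edge is covered). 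The single elementary fact driving everything is that, since $B\subseteq A$, every move legal in $H|A$ is legal in $H|B$ (but not conversely). The three statements I bundle are: (i)~$\dstart(H|A)\le\dstart(H|B)$; (ii)~$\sstart(H|A)\le\sstart(H|B)$; and (iii)~$\dstart(K)\le\sstart(K)+1$ for every residual $K$.

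First I would dispatch the Staller-start inequality (ii), the easy direction. If $H|A$ has no uncovered edge this is trivial; otherwise let $w$ be Staller's optimal first move in $H|A$, so $\sstart(H|A)=1+\dstart(H|(A\cup D(w)))$. Being legal in $H|A$, the move $w$ is legal in $H|B$, and the successor covered sets satisfy $A\cup D(w)\supseteq B\cup D(w)$ while having strictly fewer uncovered edges than $H|B$; thus the induction hypothesis for (i) gives $\dstart(H|(A\cup D(w)))\le\dstart(H|(B\cup D(w)))$. Since playing $w$ is just one option for Staller in $H|B$, we get $\sstart(H|A)=1+\dstart(H|(A\cup D(w)))\le 1+\dstart(H|(B\cup D(w)))\le\sstart(H|B)$.

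The Edge-hitter-start inequality (i) carries the real difficulty. Let $v$ be Edge-hitter's optimal first move in $H|B$, so $\dstart(H|B)=1+\sstart(K)$ with $K:=H|(B\cup D(v))$. If $v$ is legal in $H|A$, Edge-hitter simply copies it: the successor $H|(A\cup D(v))$ is weakly more covered than $K$, so the induction hypothesis for (ii) yields $\sstart(H|(A\cup D(v)))\le\sstart(K)$, and $\dstart(H|A)\le 1+\sstart(H|(A\cup D(v)))\le 1+\sstart(K)=\dstart(H|B)$. The obstacle is the case where $v$ is \emph{not} legal in $H|A$, i.e.\ $D(v)\subseteq A$: then Edge-hitter's reference move is already redundant in $H|A$ and cannot be imitated. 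Here, though, the covered set of $K$ is $B\cup D(v)\subseteq A$, so $H|A$ is weakly more covered than $K$ while $K$ has fewer uncovered edges than $H|B$; the induction hypothesis for (i) gives $\dstart(H|A)\le\dstart(K)$, and combining with (iii) for $K$ yields $\dstart(H|A)\le\dstart(K)\le\sstart(K)+1=\dstart(H|B)$.

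It remains to prove the auxiliary inequality (iii), and to check that the three statements interlock without circularity. For (iii): if $K=H|S'$ has an uncovered edge, pick any legal $v$; then $\dstart(K)\le 1+\sstart(H|(S'\cup D(v)))$, and since $S'\cup D(v)\supseteq S'$, the $\sstart$-monotonicity (ii) bounds the successor by $\sstart(K)$, giving $\dstart(K)\le\sstart(K)+1$. The main delicacy, and the point I would verify most carefully, is the bookkeeping of the joint induction so that the cross-references are acyclic. At a given number $\ell$ of uncovered edges, the $\dstart$-monotonicity (i) invokes only instances with strictly fewer than $\ell$ uncovered edges (of all three statements); the $\sstart$-monotonicity (ii) invokes only (i) below level $\ell$; and the auxiliary inequality (iii) invokes (ii) at level $\ell$ itself. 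Hence at each level one establishes, in the order (i), (ii), (iii), all three statements, and the induction closes.
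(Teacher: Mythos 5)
Your proof is correct, but it takes a genuinely different route from the paper. The paper proves the lemma by an imagination strategy: the real game is played on $H|A$ with \St optimal, while \Eh privately runs an optimal game on $H|B$, copying Staller's moves into the imagined game (always legal there, since the covered set of Game~B stays inside that of Game~A) and copying his optimal Game~B replies back into Game~A when legal, playing an arbitrary legal move otherwise; the invariant ``covered($B$) $\subseteq$ covered($A$)'' then forces Game~A to end no later than Game~B, giving both inequalities at once with no explicit recursion. You instead work directly with the min--max recursions for $\dstart$ and $\sstart$ and run a joint induction on the number of uncovered edges, and the interesting point is exactly where the two proofs diverge: when Edge-hitter's optimal move $v$ in $H|B$ satisfies $D(v)\subseteq A$ and so cannot be imitated, the paper absorbs the parity shift by letting \Eh play ``any other legal move'' (which still preserves the invariant), whereas you must pay for it explicitly with the auxiliary inequality $\dstart(K)\le\sstart(K)+1$, proved inside the same induction. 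Your bookkeeping of the three interlocking statements is sound --- (i) and (ii) at level $\ell$ use only lower levels, and (iii) at level $\ell$ uses (ii) at level $\ell$, so the scheme is acyclic --- and as a bonus your induction delivers one half of the paper's Theorem~\ref{thm:difference1} en route, whereas the paper derives that theorem afterwards as a corollary of the lemma. The trade-off is that the paper's strategy-copying argument is shorter and needs no value recursion or case analysis at the root, while yours is more self-contained (pure backward induction on game values, no appeal to ``\Eh has a strategy guaranteeing at most $\dstart$ moves against arbitrary play'') and makes the parity phenomenon behind $|\dstart-\sstart|\le 1$ explicit.
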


Let us mention without quoting the formal definitions that in any graph $G$ the
 dominating sets are in one-to-one correspondence with the transversals of the
 hypergraph whose edges are the closed neighborhoods of the vertices in $G$;
 and similarly, the total dominating sets in a graph without isolated vertices
  are in one-to-one correspondence with the transversals of the
   open neighborhoods of the vertices.
 (These facts are immediate by definition.)
In this way our Transversal Continuation Principle includes, as particular
 cases, the assertions called `Continuation Principle' for the domination
 game in \cite{KWZ-2013} and for the total domination game in \cite{hkr-2015},
 hence putting them on a higher level of generality.

As another consequence of the Transversal Continuation Principle,  the
number of moves in the Edge-hitter-start transversal game and the
Staller-start transversal game when played optimally can differ by
at most one. We state this formally as follows.

\begin{thm}
\label{thm:difference1}
For every hypergraph $H$, we have $|\dstart(H) - \sstart(H)| \le 1$.
\end{thm}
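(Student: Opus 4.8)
The plan is to deduce the theorem directly from the Transversal Continuation Principle (Lemma~\ref{lem:continuation}) together with the elementary ``one-move recursion'' that expresses each game value in terms of the residual hypergraph obtained after the opening move. Since $|\dstart(H)-\sstart(H)|\le 1$ is equivalent to the two one-sided inequalities $\dstart(H)\le \sstart(H)+1$ and $\sstart(H)\le \dstart(H)+1$, I would prove these separately; both follow from the same idea.

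First I would record the recursion. If $H$ has no (uncovered) edge, then $\dstart(H)=\sstart(H)=0$ and the claim is trivial, so assume $H$ has an edge; then every vertex lying in some edge is a legal opening move, and the set of legal opening moves is the same for both games, since legality depends only on the current position and not on who is to move. For a legal move $v$, let $A_v\subseteq E(H)$ denote the set of edges that $v$ covers, so that the position after $v$ is the residual hypergraph $H|A_v$ with the other player to move. Optimal play then yields
\[
\dstart(H)=1+\min_{v}\sstart(H|A_v),\qquad \sstart(H)=1+\max_{v}\dstart(H|A_v),
\]
where the minimum and maximum range over all legal opening moves $v$; the minimum reflects that \Eh minimizes and the maximum that Staller maximizes, while the switch between $\sstart$ and $\dstart$ on the right records that the opening move reverses who is next to move.

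Now I would invoke the Transversal Continuation Principle with $B=\emptyset\subseteq A_v$. It gives $\sstart(H|A_v)\le \sstart(H|\emptyset)=\sstart(H)$ and $\dstart(H|A_v)\le \dstart(H|\emptyset)=\dstart(H)$ for every legal $v$. Substituting the first inequality into the recursion for $\dstart$ gives $\dstart(H)=1+\min_{v}\sstart(H|A_v)\le 1+\sstart(H)$; substituting the second into the recursion for $\sstart$ gives $\sstart(H)=1+\max_{v}\dstart(H|A_v)\le 1+\dstart(H)$. Combining the two yields $|\dstart(H)-\sstart(H)|\le 1$.

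I do not expect a genuine obstacle here: the entire content is carried by the monotonicity supplied by Lemma~\ref{lem:continuation}, and the only care needed is bookkeeping---verifying that the available opening moves coincide for the two games, that passing to $H|A_v$ correctly switches the player to move, and that the empty-edge base case is handled so that the recursion is well posed.
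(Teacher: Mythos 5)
Your proposal is correct and follows essentially the same route as the paper: both apply the Transversal Continuation Principle with $B=\emptyset$ to the residual hypergraph $H|A_v$ reached after an optimal opening move, obtaining $\dstart(H)=1+\sstart(H|A_v)\le 1+\sstart(H)$ and, symmetrically, $\sstart(H)\le 1+\dstart(H)$. The only difference is presentational---you spell out the min/max recursion, the coincidence of legal opening moves, and the edgeless base case, all of which the paper leaves implicit.
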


We remark that the hypergraphs that achieve equality in the bound of
Theorem~\ref{thm1},  namely the hypergraphs that belong to the
family~$\cH$, contain both $2$-edges and $3$-edges. Our  two main
results in this paper show that the upper bound of
Theorem~\ref{thm1} can be improved for $3$-uniform and $4$-uniform
hypergraphs as follows.

\begin{thm}
If $H$ is a $3$-uniform hypergraph, then  $\tau_g(H) \le \frac{5}{16}(\nH + \mH)$.
\label{3uniform}
\end{thm}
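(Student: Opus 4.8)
The plan is to prove the bound by a weighted induction on the residual hypergraph, in the discharging spirit, using the Transversal Continuation Principle (Lemma~\ref{lem:continuation}) to control Staller's moves. Since every residual hypergraph arising during the game on a $3$-uniform hypergraph is again $3$-uniform, I would introduce a \emph{weight function} $\w$ that assigns to each vertex $v$ of a residual hypergraph $H'$ a weight depending on its current degree $d_{H'}(v)$ (and, in the more general formulation, to each edge a weight depending on its size). Using the identity $3m' = \sum_{v} d_{H'}(v)$ valid in the $3$-uniform case, one can fold the edge contribution into the vertices and calibrate $\w$ so that $\w(H') \le \frac{5}{16}(n' + m')$ for every such residual $H'$, where $n'$ and $m'$ denote the order and size of $H'$. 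The reason for making the vertex weights degree-sensitive is that a plain count of $n' + m'$ drops by too little on a single (Staller) move, whereas degree-sensitive weights also register the weight lost by the \emph{surviving} neighbours of a chosen vertex, which supplies exactly the extra credit needed to beat $\frac{5}{16}$.

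Concretely, I would prove two statements simultaneously by induction on $m'$: (A) $\dstart(H') \le \w(H')$, and (B) $\sstart(H') \le \w(H') + t$ for a suitable small constant $t>0$, the base case being the empty hypergraph. For the step of (B), Staller's first move produces a residual $H_1$ with $\sstart(H') = 1 + \dstart(H_1) \le 1 + \w(H_1)$ by (A), so (B) follows once we know that \emph{every} legal move decreases $\w$ by at least $1 - t$. For the step of (A), Edge-hitter selects a vertex producing a residual $H_1$ with $\dstart(H') = 1 + \sstart(H_1) \le 1 + \w(H_1) + t$ by (B), so (A) follows once we know that Edge-hitter can always find a legal move decreasing $\w$ by at least $1 + t$. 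Applying (A) to the initial hypergraph yields $\tau_g(H) = \dstart(H) \le \w(H) \le \frac{5}{16}(\nH + \mH)$, as required. Throughout, the monotonicity provided by Lemma~\ref{lem:continuation} keeps the induction well-founded and couples the two start-variants; in particular it is what underlies Theorem~\ref{thm:difference1}, and the fact that declaring an edge covered never increases the number of remaining moves is repeatedly invoked when simplifying residual configurations.

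The proof therefore reduces to two local weight-decrease claims, and the calibration of $\w$ and of $t$ is dictated by them. The first is an easy worst-case estimate: the smallest possible decrease is caused by removing a single edge together with a degree-one vertex whose two partners remain of high degree, and one checks this exceeds $1 - t$. The second, that Edge-hitter can always guarantee a decrease of at least $1 + t$, is the substantive claim: he wishes to pick a vertex whose removal covers several edges and/or isolates several further vertices, each isolation releasing an additional vertex weight.

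The main obstacle is precisely this second claim. A naive estimate fails in sparse configurations — where every vertex has small degree and no single move appears to cover or isolate enough — so the heart of the argument is a structural case analysis showing that such locally unfavourable patterns cannot coexist globally: a low-degree neighbourhood forces nearby vertices to become isolated once a well-chosen vertex is played, thereby restoring the required decrease. I would organize this as a discharging argument over the possible local configurations around a carefully selected candidate vertex, tuning the degree-dependent weights so that all cases close simultaneously with a single constant $t$; the value $\frac{5}{16}$ is exactly the threshold at which this system of inequalities first becomes feasible, which is what makes the analysis delicate. Finally, degenerate components — isolated $3$-edges, or sub-configurations that reduce to the graph case — can be dispatched directly, if necessary by appealing to Observation~\ref{obser1} and to the known bounds in Theorems~\ref{thm1} and~\ref{2uniform}.
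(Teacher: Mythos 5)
Your overall framework---a degree-sensitive potential $\w$ normalized so that $\w(H)\le\frac{5}{16}(\nH+\mH)$, with the theorem extracted from guaranteed per-move weight decreases---is indeed the spirit of the paper's proof (which uses vertex weights $15,14,11,0$ for degrees $\ge 3,2,1,0$, edge weight $15$, and proves $48\,\dstart(H)\le \w(H)$). The genuine gap is your reduction of everything to two \emph{uniform} local inequalities: every legal (Staller) move loses at least $1-t$, and Edge-hitter always has a move losing at least $1+t$, for a single global constant $t$. That system is infeasible at the ratio $\frac{5}{16}$ for any nonnegative degree-based weights, so the ``delicate calibration'' you defer to cannot exist. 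Concretely, normalize the per-move target to $1$, let $w_d$ be the weight of a degree-$d$ vertex, $b=w_1$, and $e$ the edge weight. Testing your budget on $D$-regular $3$-uniform hypergraphs ($\mH=D\nH/3$) forces $w_D\le\frac{5}{16}+D\left(\frac{5}{48}-\frac{e}{3}\right)$; hence $e\le\frac{5}{16}$, and since $w_1\ge 0$, by pigeonhole some increment $w_d-w_{d-1}$ is arbitrarily close to (or below) $\frac{5}{48}-\frac{e}{3}$. Staller may therefore play a degree-$1$ vertex whose two partners in its edge have two such degrees, losing only about $b+e+2\left(\frac{5}{48}-\frac{e}{3}\right)=b+\frac{e}{3}+\frac{10}{48}\le b+\frac{15}{48}$, so your scheme needs $b\ge\frac{33}{48}-t$. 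But when the residual consists of two isolated edges and it is Edge-hitter's turn, his move loses exactly $3b+e$, so you need $3b+e\ge 1+t$, while the budget on isolated edges gives $3b+e\le\frac{5}{16}\cdot 4=\frac{5}{4}$; hence $t\le\frac{1}{4}$ and $b\le\frac{20}{48}$, contradicting $b\ge\frac{33}{48}-t\ge\frac{21}{48}$. In particular your assertion that $\frac{5}{16}$ is ``exactly the threshold at which this system of inequalities first becomes feasible'' is false: a uniform $(1+t,\,1-t)$ split can only reach a weaker constant.

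What the paper does instead, and what your proposal is missing, is regime-dependent and partly non-local accounting. While white vertices exist, Edge-hitter aims at a single-move decrease of $68$ because Staller's reply then loses at least $28$ (Claim~\ref{3unif}.B); once all vertices are green or blue, $64$ suffices against Staller's minimum of $32$ (Claims~\ref{3unif}.F and \ref{3unif}.H). In the final regime---a disjoint union of $2$-regular linear components and isolated edges---no per-move constants work at all: Edge-hitter's first move in a $2$-regular component loses only $56<96-32$, and the deficit is recovered by amortizing over his \emph{entire sequence} of moves inside one component $F$, closing the endgame either by the odd-length slack or by Claims~\ref{3unif}.L and \ref{3unif}.M; the latter is a parity argument ($|V(F)|$ is divisible by $3$ in a $2$-regular $3$-uniform component, whereas the all-bad pattern would force $|V(F)|=6\ell+1$) guaranteeing one move worth at least $72$. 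This sequence-level, divisibility-based bookkeeping is precisely what your two-inequality induction cannot express, and it is where $\frac{5}{16}$ actually comes from. A side remark: your fallback of dispatching degenerate configurations via Theorems~\ref{thm1} and~\ref{2uniform} cannot help, since $\frac{4}{11}>\frac{5}{16}$ and, as you yourself note, residual hypergraphs of a $3$-uniform hypergraph remain $3$-uniform.
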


\begin{thm}
If $H$ is a $4$-uniform hypergraph, then  $\tau_g(H) \le
\frac{71}{252}(\nH + \mH)$. \label{4uniform}
\end{thm}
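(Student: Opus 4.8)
\ The plan is to prove the bound by strong induction on $\nH+\mH$, phrasing the inductive step as a single \emph{round} of the game (a move of \Eh followed by a move of \St) and controlling the progress of the game by a weight function. Throughout we use the fact that the transversal game preserves $4$-uniformity: when a vertex is played, every edge containing it is covered and removed, while every surviving edge retains all four of its vertices (a vertex is discarded only once it is isolated, i.e.\ lies in no surviving edge). Hence every residual hypergraph arising from a $4$-uniform hypergraph is again $4$-uniform, and the induction stays inside this class. As is standard for such games, I would prove the two statements $\dstart(H)\le \frac{71}{252}(\nH+\mH)$ and $\sstart(H)\le \frac{71}{252}(\nH+\mH)$ simultaneously (their near-equivalence being guaranteed by Theorem~\ref{thm:difference1}), so that the residual hypergraph reached after the move of \Eh can be fed back into the induction with \St to move.

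The core device is a weight function $\w$ on residual $4$-uniform hypergraphs. The naive choice $\w(F)=\frac{71}{252}(n_F+m_F)$ does not work: a single move deletes one edge plus only those vertices that thereby become isolated, so when degrees are small and edges overlap heavily a whole round may lower $n_F+m_F$ by fewer than the $\frac{504}{71}\approx 7.1$ units needed to account for two moves. I would therefore take $\w(F)=\frac{71}{252}(n_F+m_F)-\Phi(F)$, where $\Phi(F)\ge 0$ is a correction term assigning a small credit to each vertex according to its \emph{local type} — principally its degree and the degrees of the vertices sharing an edge with it. The credit is arranged so that $\w(F)\le\frac{71}{252}(n_F+m_F)$ always holds, while exactly those configurations that force slow progress carry enough stored credit to pay the shortfall of a cheap round. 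The theorem then follows from $\dstart(H)\le\w(H)$, proved inductively, together with $\w(H)\le\frac{71}{252}(\nH+\mH)$, which is immediate from $\Phi\ge 0$.

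For the inductive step I would have \Eh follow a greedy strategy: play a vertex $v$ lying in a densest available local configuration — in the easy regime a vertex of maximum degree, whose single move already removes enough edges that a crude count settles the round, and in the critical regime a vertex chosen so that covering its edges both removes weight and dismantles the overlapping low-degree configurations in which credit is stored. After \Eh plays $v$, Staller answers with an arbitrary legal move $u$; to bound the residual hypergraph $H''$ from above I would invoke the Transversal Continuation Principle (Lemma~\ref{lem:continuation}): since $H''$ arises from the post-$v$ hypergraph by covering the edges through $u$, the monotonicity of $\dstart$ and $\sstart$ permits comparing $\dstart(H'')$ with the inductive bound even for the most favourable reply of Staller. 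One then checks, for each local type of $v$ and each class of reply $u$, the round inequality $\w(H'')\le\w(H)-2$.

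The hard part will be this final verification. When the maximum degree of the residual hypergraph is large the round inequality follows from a crude count, so the substance lies in the bounded-degree configurations; and because $4$-uniform edges can overlap in many patterns, the number of local configurations around the vertex played by \Eh is large, and the credit scheme $\Phi$ must be calibrated so that every case satisfies $\w(H'')\le\w(H)-2$. The constant $\frac{71}{252}$ is precisely the value at which the worst of these cases is in balance, and is most naturally pinned down by writing the case constraints as a small linear program in the type-weights. I expect the genuinely delicate cases to be those in which a vertex of degree two or three lies in two or three mutually overlapping $4$-edges, since there the collateral removal is smallest and Staller can most nearly force a cheap round, so the stored credit must be spent with no waste. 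This mirrors, with considerably heavier bookkeeping, the argument behind the $3$-uniform bound of Theorem~\ref{3uniform}.
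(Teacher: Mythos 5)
Your high-level framework --- a potential function on residual $4$-uniform hypergraphs, an amortized per-move target, and a greedy Edge-hitter who plays a vertex of maximum degree in the easy regime --- is the same genre as the paper's proof: the paper assigns weight $852$ to each uncovered edge and weights $543$--$852$ to vertices keyed to their degree, and proves $3024\,\dstart(H)\le \w(H)$, with $\frac{71}{252}=\frac{852}{3024}$ emerging exactly as your ``linear program in type-weights'' suggests. However, there is a genuine gap at the heart of your plan: the round-by-round inequality $\w(H'')\le \w(H)-2$, verified locally for each type of Edge-hitter move and each class of Staller reply, is precisely what fails in the critical regime, and no potential $\Phi$ depending only on a vertex's degree and its neighbors' degrees can repair it. Once every component of the residual hypergraph is a $2$-regular \emph{linear} hypergraph, Staller can answer each Edge-hitter move with a degree-$1$ vertex having three degree-$2$ neighbors; with the paper's (optimally tuned) weights, Edge-hitter's opening move in such a component gains $3696$ while Staller's reply gains only $2016$, so the first round falls $336$ short of the $2\cdot 3024$ target, and every later round ($4032+2016$) is exactly tight, leaving no slack to absorb the deficit. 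The paper recovers it only by amortizing over Edge-hitter's \emph{entire sequence} of moves inside the component and invoking a pigeonhole/parity argument (Claims~\ref{4unif}.F and~\ref{4unif}.G): because Edge-hitter always plays a green vertex with the maximum number of blue neighbors, when the green vertices are exhausted some move of his was forced to be worth at least $4368$, or some Staller move was worth at least $2352$. This is a sequential, component-long argument about move ordering, not a per-round local count, and it is the real content of the endgame; your sketch neither identifies it nor provides machinery (e.g.\ component-level bookkeeping plus the parity fact) that could substitute for it.

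Two further points. First, you locate the delicate cases in ``two or three mutually overlapping $4$-edges,'' but overlapping edges are in fact the \emph{easy} cases --- the paper dispatches them in a single round by direct counting (Claim~\ref{4unif}.D, gains $4032+2016$); the hard case is the linear one just described. Second, your simultaneous induction on $\sstart(H)\le\frac{71}{252}(\nH+\mH)$ is unjustified: Theorem~\ref{thm:difference1} only yields $\sstart(H)\le\dstart(H)+1$, and the clean Staller-start bound is not available --- the paper's own Staller-start corollary carries an additive term $\frac{110}{252}$, reflecting that Staller's first move may be cheap. Your pairing of moves into rounds makes this crutch avoidable (the paper likewise sums over blocks with $k$ even, or $k$ odd when the game ends on Edge-hitter's move), but as written the inductive statement you feed back after Edge-hitter's move is stronger than anything you have established. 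A related device you would also need and do not have: the paper's vertex weights are \emph{adaptive}, re-tuned at each maximum-degree threshold via the parameter $\Delta^*(H)$ (with Staller's turn evaluated at the value before Edge-hitter's preceding move), which is how the global drop of all vertex weights when $\Delta(H)$ falls is credited to Edge-hitter; a static local $\Phi$ has no analogue of this.
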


Proofs of Theorem~\ref{thm:difference1}, Theorem~\ref{3uniform} and
Theorem~\ref{4uniform} are given in Section~\ref{S:TCP},
Section~\ref{S:3uniform} and Section~\ref{S:4uniform}, respectively.

\section{Family of Hypergraphs}
\label{S:family}

By Observation~\ref{obser1}, every hypergraph $H$ satisfies
$\dstart(H) \le 2\tau(H)-1$ and $\sstart(H) \le 2\tau(H)$. In this
section, we present an infinite family of hypergraphs $H$ with
$\dstart(H) = 2\tau(H)-1$ and  $\sstart(H)= 2\tau(H)$. For this
purpose, we define a \emph{$k$-corona} of a hypergraph $H$  to be a
hypergraph obtained by attaching $k$ hyperedges (each of size at
least~$2$) to each vertex of $H$, where the hyperedges attached to a
vertex $v \in V(H)$ contain only degree-$1$ vertices apart from~$v$.

\begin{prop}
For every positive integer $k$ and for every hypergraph $H$ of order
at most $2^{k-1}-1$, every $k$-corona $H^k$ of $H$ satisfies
$\tau_g(H^k) = 2\tau(H^k)-1$ and $\sstart(H^k) = 2\tau(H^k)$.
\end{prop}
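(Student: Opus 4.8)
The plan is to first pin down $\TR{H^k}$ and then attack the two equalities from opposite sides. Since $\Vx{H}$ is a transversal of $H^k$ (each attached edge contains its hub, and each edge of $H$ is met by its endpoints), we have $\TR{H^k}\le \nH$; conversely, any transversal omitting a hub $v$ must contain a vertex from each of the $k\ge 1$ attached edges of $v$, and these are pairwise disjoint apart from $v$, so a short counting over the hubs gives $\TR{H^k}\ge \nH$. Hence $\TR{H^k}=\nH$, and Observation~\ref{obser1} yields the upper bounds $\dstart(H^k)\le 2\nH-1$ and $\sstart(H^k)\le 2\nH$ for free. It then remains only to prove the matching lower bounds by exhibiting a strategy for Staller.

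The key bookkeeping device is to classify moves. Every vertex of $H^k$ is either a hub (a vertex of $H$) or a degree-$1$ vertex lying in a single attached edge, so each move covers attached edges of at most one hub. Call a hub \emph{active} while at least one of its attached edges is uncovered. The last attached edge of a hub is covered by a unique move (a play of the hub, or a play of a degree-$1$ vertex in that edge), and since one move finishes at most one hub, exactly $\nH$ moves are \emph{finishing}, one per hub. Writing $T$ for the length of the game, we thus have the identity $T=\nH+N$, where $N$ counts the non-finishing moves; the edges of $H$ only add further edges to be hit and can never shorten the game. Consequently, if Staller can guarantee that every one of her own moves is non-finishing, then all $\nH$ finishing moves fall to Edge-hitter, so he makes at least $\nH$ moves: in the Edge-hitter-start game this forces $T\ge 2\nH-1$, and in the Staller-start game $T\ge 2\nH$, as required.

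The task reduces to showing that Staller always has a non-finishing move available, that is, on each of her turns some active hub still has at least two uncovered attached edges, at which she then plays a fresh degree-$1$ vertex. Encoding the state by the multiset of values $c_v$ (the number of uncovered attached edges at an active hub $v$), Staller's move decrements an entry with $c_v\ge 2$, while Edge-hitter can at best delete a whole entry by playing that hub. The natural strategy for Staller is to decrement a hub of maximum current value, and the obstacle is to prove she is never \emph{starved}, i.e.\ never reaches one of her turns with all active hubs at $c_v=1$. I would control this with the potential $\Phi=\sum_{v\ \mathrm{active}}2^{\,c_v-1}$: Staller is safe on a turn exactly when $\Phi$ strictly exceeds the number of active hubs, the initial value being $\Phi=\nH\,2^{\,k-1}$. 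The heart of the matter is to show that Edge-hitter's most damaging play cannot drive $\Phi$ down to the number of surviving hubs before all hubs are finished; this is exactly the doubling recorded in the bound $2^{k-1}-1$ (raising $k$ by one doubles the admissible order $\nH$). I expect this starvation-avoidance estimate to be the main obstacle and the step requiring the most care.

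Two further points finish the argument. By the Transversal Continuation Principle (Lemma~\ref{lem:continuation}), if Edge-hitter plays a degree-$1$ vertex of an edge at a hub $v$, then playing $v$ instead covers a superset of edges and hence leaves a residual game requiring no more moves; thus we may assume Edge-hitter only plays hubs, and his most damaging option against $\Phi$ is to delete a hub of largest current value. Second, the Staller-start game requires Staller to survive $\nH$ of her turns rather than $\nH-1$, and the single unit of slack in the hypothesis $\nH\le 2^{k-1}-1$ is precisely what furnishes this extra safe move. Combining the lower bounds obtained from the identity $T=\nH+N$ with the upper bounds from Observation~\ref{obser1} gives $\dstart(H^k)=2\TR{H^k}-1=2\nH-1$ and $\sstart(H^k)=2\TR{H^k}=2\nH$, completing the proof.
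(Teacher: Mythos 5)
Your reduction is sound as far as it goes: $\TR{H^k}=\nH$, the upper bounds from Observation~\ref{obser1}, and the accounting ``all of Staller's moves non-finishing $\Rightarrow$ Edge-hitter makes all $\nH$ finishing moves $\Rightarrow$ $\dstart\ge 2\nH-1$, $\sstart\ge 2\nH$'' are all correct. But the proof has a genuine gap at exactly the point where the hypothesis $\nH\le 2^{k-1}-1$ must enter: the starvation-avoidance estimate. You introduce the potential $\Phi=\sum_{v\ \mathrm{active}}2^{c_v-1}$ and the strategy ``decrement a hub of maximum value,'' but then write that you ``would control this'' with $\Phi$ and ``expect this starvation-avoidance estimate to be the main obstacle.'' That step is announced, not carried out, and it is the entire content of the proposition -- everything before it is routine. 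Nothing in the proposal shows that $\Phi$ decays slowly enough relative to the active-hub count, nor that max-decrement is the right rule, nor how the threshold $2^{k-1}-1$ falls out. A secondary problem compounds this: your potential analysis assumes Edge-hitter ``can at best delete a whole entry,'' justified by a WLOG (via Lemma~\ref{lem:continuation}) that he plays only hubs. That WLOG needs a game-tree induction to turn ``hub moves are weakly optimal at each state'' into ``there is an optimal hub-only strategy,'' and since your lower bound rests on Staller's strategy surviving against Edge-hitter's actual play, his degree-$1$ moves (which decrement entries rather than delete them) would otherwise have to be handled inside the potential argument, where they behave quite differently.

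For comparison, the paper closes exactly this hole with a different device that requires neither the finishing-move bookkeeping nor any restriction on Edge-hitter's moves. It assigns weight $2^{j-1}$ to the $j$th edge attached at each hub (and weight $0$ to edges of $H$), so the initial weight is $\nH(2^k-1)$, and has Staller always play a degree-$1$ vertex on an uncovered attached edge of \emph{smallest} positive weight. The key observation is that no single vertex of $H^k$ lies in two attached edges of the same weight; hence after Staller removes weight $2^s$, \emph{any} reply of Edge-hitter -- hub, degree-$1$ vertex, anything -- removes pairwise distinct weights from $\{2^s,\dots,2^{k-1}\}$, i.e.\ at most $2^k-2^s$, so each Staller--Edge-hitter pair of moves costs at most $2^k$. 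Simple arithmetic then shows the residual weight after $2\nH-2$ turns (respectively $2\nH-1$ turns in the Staller-start game) is at least $2^{k-1}-\nH+1\ge 2$ (respectively $2^{k-1}-\nH\ge 1$), so uncovered edges remain and the game continues. Note that the paper's Staller plays the \emph{cheapest} remaining attached edge, essentially the opposite of your max-decrement heuristic, because cheapest-first is what caps the opponent's per-round weight removal. If you want to salvage your count-based framework, you would need to prove a lemma of comparable strength for $\Phi$, valid against decrementing as well as deleting moves by Edge-hitter; as it stands, the proposal is a plausible plan whose decisive step is missing.
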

\begin{proof}
Let us denote the vertices of $H$ by $v_1, \dots, v_n$ and the
hyperedges attached to $v_i$ by $e(1,i), \dots, e(k,i)$. By our
assumption, $n <2^{k-1}$. Since $\tau(H^k)=n$, the inequalities
$\tau_g(H^k)\le 2n-1$ and $ \sstart(H^k) \le 2n$ are valid by
Observation~\ref{obser1}.
  Therefore, it suffices to prove that
Staller has a strategy to achieve at least $2n-1$ turns if
Edge-hitter starts the game, and at least $2n$ turns if Staller
starts.

First, we associate a weight $\w(e)$ with each hyperedge $e$ of
$H^k$ as follows. If $e$ is a hyperedge of $H$, then we let $\w(e) =
0$. If $e = e(j,i)$ is a hyperedge attached to $H$ for some $j \in
[k]$ and $i \in [n]$, then we let $\w(e) = 2^{j-1}$. As the game is
played, when a hyperedge is hit by a played vertex, the weight of
such a hyperedge becomes zero. Hence, if $\w(H^k)$ denotes the sum
of the weights of the edges in the residual hypergraph $H^k$, then
the game starts with
\[
\w(H^k) = \sum_{i=1}^n \sum_{j=1}^k 2^{j-1} = n(2^k-1),
\]
and is completed when $\w(H^k) = 0$; that is, the game is completed when the sum of the weights of the edges in the residual hypergraph $H^k$ equals zero. We consider the following strategy of Staller.

\noindent \textbf{Staller's Rule:} She always plays a vertex of degree~$1$ such that the incident hyperedge has the smallest positive weight in the residual hypergraph.

We show that if \St applies this rule, each of her moves together
with the next move of \Eh decreases the weight by at most~$2^{k}$.
If Staller plays a vertex incident to an attached hyperedge of
weight $2^s$ for some $s \in [k-1]\cup \{0\}$, then in the next turn
\Eh cannot choose a vertex which is incident to a hyperedge of
smaller positive weight. Moreover, no single vertex of $H^k$ is
incident with two hyperedges of the same positive weight. Hence,
Edge-hitter's move decreases the weight of the residual hypergraph
by at most
\[
\sum_{i=s}^{k-1} 2^{i}=2^{k}-2^s\,,
\]
and, together with Staller's previous move which decreases the weight by~$2^s$, their two moves combined decrease the weight by at most~$2^k$.

If \Eh begins the game, his first move decreases the weight of the residual hypergraph by at most
\[
\sum_{i=1}^{k} 2^{i-1}=2^{k}-1,
\]

\noindent while, if the weight of the residual hypergraph is not
zero after Edge-hitter plays his $(n-1)$st move (that is, after the
$(2n-3)$rd turn), then Staller's $(n-1)$st move in the $(2n-2)$nd
turn decreases the weight by at most $2^{k-1}$.  Therefore, the
weight of the residual hypergraph after the $(2n-2)$nd  turn is at
least
\[
n(2^k-1)-(2^{k}-1)-(n-2)2^k - 2^{k-1}= 2^{k-1}-n+1\,,
\]

\noindent which is at least 2, as we supposed $n \le 2^{k-1}-1$.
Since the obtained  hypergraph has still positive weight, there
exist some uncovered edges. Thus, \St has a strategy which makes
sure that the game is not complete after the $(2n-2)$nd turn,
implying that $\dstart(H^k) \ge 2n-1$.  Consequently,  $\dstart(H^k)
= 2n-1 = 2\tau(H^k)-1$.

Similarly, if Staller begins the game, then after her $n$th move played in the $(2n-1)$st turn, the weight of the residual hypergraph is at least
\[
n(2^k-1) - (n-1)2^k - 2^{k-1}= 2^{k-1}-n \ge 1\,.
\]

\noindent
 Thus, \St has a strategy which guarantees that
 the length of the game is at least $2n$,  implying that $\sstart(H^k) \ge 2n$.
 Therefore,  $\sstart(H^k) = 2n = 2\tau(H^k)$.~\qed
\end{proof}

\section{The Transversal Continuation Principle}
\label{S:TCP}

In this section, we present a proof of the Transversal Continuation Principle. Recall its statement.

\noindent \textbf{Lemma~\ref{lem:continuation}} {\rm (Transversal Continuation Principle)}. \emph{Let $H$ be a hypergraph and let $A,B \subseteq E(H)$.  If $B \subseteq A$, then $\dstart(H|A) \le \dstart(H|B)$ and $\sstart(H|A) \le \sstart(H|B)$.}

\medskip
\begin{proof} Two games will be played, Game A on the hypergraph $H|A$ and Game B on the hypergraph $H|B$. The first of these will be the real game, while Game B
will only be imagined by Edge-hitter. In Game A, \St will play optimally while in Game B, \Eh will play optimally.

We claim, by induction on the number of moves played,  that in each stage of the games, the set of edges that are covered in Game~B is a subset of the edges that are covered in Game~A. Since $B \subseteq A$, this is true at the start of the games. Suppose now that \St has (optimally) selected vertex $u$ in Game~A. This move of \St hits at least one new edge, say $e_u$, in Game~A. By the induction assumption, the edge $e_u$ is not yet hit in Game~B, and so the vertex $u$ is a legal move in Game~B. \Eh now copies the move of \St and plays vertex $u$ in Game~B, and then replies with an optimal move in Game~B. If this move is legal in Game A, \Eh plays it in Game~ A as well. Otherwise, if the game is not yet over, \Eh plays any other legal move in Game A. In both cases the claim assumption is preserved, which by induction also proves the claim.

We have thus proved that Game~A finishes no later than Game~B. Suppose thus that $k$ moves are played in Game~B. Since \Eh was playing optimally in Game~B, $k \le \dstart(H|B)$. On the other hand, because \St was playing optimally in Game~A and \Eh has a strategy to finish the game in $k$ moves, $\dstart(H|A) \le k$. Therefore, $\dstart(H|A) \le k \le \dstart(H|B)$. Thus, if \Eh is the first to play, the desired bound holds. In the above arguments we did not assume who starts first, hence in both cases Game~A will finish no later than Game~B, implying that $\sstart(H|A) \le \sstart(H|B)$.~\qed
\end{proof}

\medskip

  If two vertices are incident with precisely the same edges, then
   at most one of them can be played during the game.
 Now, assume that in the  residual
hypergraph $H$, vertex $v$ hits all the edges that $u$ hits, but
$d_H(v) > d_H(u)$. As a consequence of the Transversal Continuation
Principle, we may suppose that \Eh never plays  $u$ and \St never
plays  $v$.
%

Theorem~\ref{thm:difference1} follows from the Transversal Continuation Principle. Recall its statement.

\noindent \textbf{Theorem~\ref{thm:difference1}}. \emph{For every hypergraph $H$, we have $|\dstart(H) - \sstart(H)| \le 1$.}

\medskip
\begin{proof} Consider the Edge-hitter-start transversal game and let $v$ be the first move of Edge-hitter. Let $A$ be the set of edges hit by $v$ and let $B=\emptyset$, and consider the partially covered hypergraphs $H|A$ and $H|B$. We note that $H|B = H$ and $\dstart(H) = 1 + \sstart(H|A)$. By the Transversal Continuation Principle, $\sstart(H|A) \le \sstart(H|B) = \sstart(H)$. Therefore, $\dstart(H) =  \sstart(H|A) + 1 \le \sstart(H) + 1$. Analogously, $\sstart(H) \le \dstart(H) + 1$.~\qed
\end{proof}

\section{Proof of Theorem~\ref{3uniform} and Theorem~\ref{4uniform}}

We remark that if $H$ is a hypergraph, and $H'$ is obtained from $H$
by deleting all multiple edges in $H$  (in the sense that if $H$ has
$\ell$ distinct edges $e_1, e_2, \ldots, e_\ell$ that are multiple
edges, and so $e_1 = e_2 = \cdots = e_\ell$, then we delete $\ell -
1$ of these multiple edges), then $\tau_g(H') = \tau_g(H)$. Hence,
it suffices to prove Theorem~\ref{3uniform} and
Theorem~\ref{4uniform} in the case of hypergraphs with no multiple
edges.

\subsection{Proof of Theorem~\ref{3uniform}}
\label{S:3uniform}

 In this section, we prove Theorem~\ref{3uniform}. For
this purpose, we define a \emph{colored hypergraph} with
respect to the played vertices in the set $D$ as a hypergraph in
which every vertex is colored with one of four colors, namely white,
green, blue, or red, according to the following rules.
\begin{itemize}
\item A vertex is colored \emph{white} if it is incident with  at least~$3$  edges uncovered by $D$.
\item A vertex is colored \emph{green} if it is incident with  exactly~$2$   edges uncovered by $D$..
\item A vertex is colored \emph{blue} if it is incident with exactly~$1$ edge uncovered by $D$.
\item A vertex is colored \emph{red} if it is not incident with any
edges uncovered by $D$.
\end{itemize}

Further, an edge is colored \emph{white} if it is not covered by a
vertex of $D$, and is colored \emph{red} otherwise. Thus, an edge is
colored red if it contains a red vertex.

 By our definition given in the Introduction,  the residual hypergraph does not contain  red vertices and  red edges.
  That is, the vertices  of the residual hypergraph are colored with white, green and blue as defined
 above. Note that every edge of the residual hypergraph is white.

 In a colored hypergraph, and also in a colored residual hypergraph,
    we associate a weight
of~$15$ to each white edge and a weight of~$0$ to each red edge.
Further, we associate a weight with every vertex  as follows:

\begin{center}
\begin{tabular}{|c|c|c|}  \hline
\emph{Color of vertex} & \emph{Degree in the} & \emph{Weight of vertex} \\
 & \emph{residual hg.} & \\
\hline
white & $\ge 3$ & $15$ \\
green & $2$ & $14$  \\
blue & $1$ &$11$  \\
red & --- & $0$  \\ \hline
\end{tabular}
\end{center}
\begin{center}
\textbf{Table~1.} The weights of vertices according to their color.
\end{center}

Let $\nW$, $\nGr$ and $\nB$ denote the set of
white, green and blue vertices, respectively, in the residual
hypergraph~$H$. We define the \emph{weight} of the residual
hypergraph $H$ as the sum of the weights of the vertices and edges
in $H$ and denote this weight by $\w(H)$. Thus,
\[
\w(H) = 15|\nW| + 14|\nGr| + 11|\nB| + 15\mH.
\]

We note that as the game is played, if the color status of a vertex
changes,  then the color status of a green vertex can only change to
blue or red, while the color status of a blue vertex can only change
to red. We shall prove the following key theorem. From our earlier
observations, it suffices for us to prove Theorem~\ref{3unif} in the
case of hypergraphs with no multiple edges.

\begin{thm}
If $H$ is a $3$-uniform residual hypergraph, then
$48 \dstart(H) \le \w(H)$.
\label{3unif}
\end{thm}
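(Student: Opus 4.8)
The plan is to prove the statement by induction on the weight $\w(H)$ (equivalently, on the number of edges of the residual hypergraph), the base case being a hypergraph with no edges, where $\dstart(H)=0$ and $\w(H)\ge 0$. For the inductive step I would analyze the Edge-hitter-start game two moves at a time: Edge-hitter makes a move according to a strategy to be specified, Staller responds, and one passes to the residual hypergraph $H''$ obtained after these two moves. Writing $a_1$ and $a_2$ for the weight decreases caused by the two moves, the identity $\dstart(H)=2+\dstart(H'')$ together with the inductive bound $48\dstart(H'')\le \w(H'')=\w(H)-a_1-a_2$ reduces everything to the single inequality $a_1+a_2\ge 96$. Thus the whole theorem rests on showing that Edge-hitter has a move for which \emph{every} legal Staller answer yields a combined decrease of at least $96=2\cdot 48$; the degenerate cases in which the game ends during these two moves (after one, or after two) must be checked directly, and there one needs only $\w(H)\ge 48$, respectively $\w(H)\ge 96$, which follow at once from the vertex and edge weights.

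Before the case analysis I would record, once and for all, how much weight a single move removes, as a function of the colour (degree) of the played vertex and of the colours of the other vertices in the edges it newly covers. Covering an edge removes its weight $15$, the played vertex drops to weight $0$, and every other vertex in a newly covered edge can only change colour downwards: a white vertex of degree exactly $3$ becomes green (loss $1$), a green vertex becomes blue (loss $3$), and a blue vertex becomes red (loss $11$). Two facts drive the proof. First, any legal move removes at least $15+11=26$ (the covered edge plus the played vertex, whose weight is at least $11$). Second, the perfect matching of $3$-edges --- every vertex blue --- has $\w(H)=3\cdot 11\,\mH+15\,\mH=48\,\mH$ and $\dstart(H)=\mH$, so every move there removes exactly $48$ and every round exactly $96$; this is the extremal configuration and confirms that $96$ is the correct per-round target.

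For Edge-hitter's strategy I would let him choose a vertex of maximum degree that, among such vertices, is adjacent to as many low-degree (blue, then green) vertices as possible, the point being that playing a vertex turns its blue neighbours red and so captures their full weight $11$. The analysis then splits on $\Delta=\Delta(H)$. When $\Delta\ge 4$, playing a maximum-degree (white) vertex removes at least $15\Delta+15\ge 75$, and with Staller's guaranteed $26$ the round already exceeds $96$. The substantial work is for $\Delta\le 3$, where I would subdivide according to the local picture around Edge-hitter's vertex: if some white (degree-$3$) vertex has a blue neighbour, playing it removes at least $45+15+11=71$ from the edges, itself, and that blue neighbour, comfortably clearing $96$ after Staller; if no white vertex meets a blue vertex, then Staller likewise cannot reach her cheapest answers, so her decrease is provably larger, and the two bounds are shown to combine to at least $96$. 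The cases $\Delta=2$ (only green and blue vertices) and $\Delta=1$ (the extremal matching) are handled by the same bookkeeping, using that a green move absorbs the weight of its blue neighbours.

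The main obstacle is precisely this small-degree case analysis, which is tight and must be exhaustive. The difficulty is structural: Staller's cheapest replies --- a blue vertex whose partners in its edge have the highest possible degree --- occur exactly where Edge-hitter, by playing one of those high-degree partners, could have extracted the most weight, so the two players compete for the same local configurations and the combined total repeatedly lands only a few units above $96$ (a naive maximum-degree choice can yield as little as $60+28=88$ or $66+28=94$). Making the argument close therefore requires choosing Edge-hitter's vertex so as to destroy nearby blue vertices, and then verifying, over every residual configuration that can follow his move, that no Staller response drops the round below $96$. I expect essentially all of the real effort to lie in organizing these local configurations for $\Delta\in\{2,3\}$ so that each is covered exactly once, and in checking that the weights $15,14,11$ are the tightest choice for which the threshold $48$ survives.
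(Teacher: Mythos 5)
Your framework (the weight function, the per-move accounting, even the strategy of playing a green vertex with as many blue neighbours as possible) matches the paper's, but your closing mechanism --- a two-move induction requiring that Edge-hitter has a first move after which \emph{every} Staller reply yields a combined decrease $a_1+a_2\ge 96$ --- is genuinely false in the critical configuration, and this is exactly where the paper has to do something different. Consider a residual hypergraph all of whose components are $2$-regular linear $3$-uniform hypergraphs, so that every vertex is green. Then \emph{every} legal Edge-hitter move removes exactly $1\cdot 14+2\cdot 15+4\cdot 3=56$ (by linearity the played vertex has four distinct neighbours, each dropping from green to blue and losing $3$), and Staller can generically answer with a blue vertex whose remaining edge contains two green vertices, removing only $11+15+2\cdot 3=32$. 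The round totals $88<96$, with no better choice available to Edge-hitter: there are no blue vertices to ``destroy'' before his first move in such a component, so the repair you propose (selecting a vertex adjacent to blue vertices) has nothing to act on. A per-round invariant, and hence your induction on $\w(H)$, cannot close here no matter how the first move is chosen.

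The paper escapes this by abandoning round-by-round accounting precisely at this point. Once the residual hypergraph is reduced (Claims~A--J) to a disjoint union of $2$-regular linear components and isolated edges, Edge-hitter commits to a single $2$-regular component $F$ and the deficit of $8$ from his first move there is amortized over the whole sequence of $2\ell$ moves: either some Staller reply is \emph{not} a blue vertex in $V(F)$ with two green neighbours, in which case that reply costs her at least $40$ and the books balance (Claim~\ref{3unif}.L), or all her replies cost exactly $32$, and then a counting argument shows some later Edge-hitter move must hit a green vertex with at least two blue neighbours (worth $\ge 72$): otherwise $|V(F)|=5+4(\ell-1)+2\ell=6\ell+1$, contradicting that a $2$-regular $3$-uniform $F$ has $3\mid |V(F)|$ (Claim~\ref{3unif}.M). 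This parity/divisibility step is the missing idea in your proposal; without some strengthened induction hypothesis (e.g., a potential that records the outstanding deficit inside the active component) the two-moves-at-a-time scheme cannot reproduce it. Your peripheral cases ($\Delta\ge 4$, white vertices with blue or green neighbours, overlapping edges, isolated edges, and the game-ending checks $\w(H)\ge 48$, $\w(H)\ge 96$) do line up with the paper's Claims~A--I and K, so the gap is localized but essential.
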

\begin{proof} If $\mH = 0$, then $\dstart(H) = 0$ and the desired bound is immediate. Hence we may assume that $\mH \ge 1$. We say that \Eh can \emph{achieve a $48$-target} if he can play a sequence of moves guaranteeing that on average the weight decrease resulting from each played vertex in the game is at least~$48$. In order to achieve a $48$-target, \Eh must guarantee that a sequence of moves $m_1, \ldots, m_k$ are played, starting with his first move $m_1$, and with moves alternating between \Eh and \St such that if $\w_i$ denotes the decrease in weight after move $m_i$ is played, then
\begin{equation}
\sum_{i = 1}^k \w_i \ge 48 \cdot k\,, \label{Eq1}
\end{equation}
where either $k$ is odd and the game is completed after move $m_k$
or $k$ is any even number  (in this latter case the game may or may
not be completed after move $m_k$). Each played vertex must hit at
least one edge not hit by the vertices previously chosen. Thus,
every move decreases the weight by at least~$26$, since every move
results in at least one vertex and at least one edge recolored red.

In the discussion that follows, we analyse how \Eh can achieve a $48$-target.
 First of all we note that there is a trivial situation, namely when \Eh
 can play a vertex that covers all remaining edges, and the current value
  of the residual hypergraph is at least 48.
 Then the 48-target is achieved with $k=1$.
This may happen in several cases below.
 We shall not mention it each time, we only discuss what happens otherwise.

We prove a series of claims that establish important properties
 that hold in the residual hypergraph $H$.

\begin{unnumbered}{Claim~\ref{3unif}.A}
If $\Delta(H) \ge 4$, then Edge-hitter can achieve a $48$-target.
\end{unnumbered}
\begin{proof} Suppose that $d_H(v) \ge 4$.
 If  \Eh  plays the vertex $v$ as his move $m_1$ in the residual hypergraph, this results in
 $\w_1 \ge 1 \cdot 15 + 4 \cdot 15 = 75 > 48\cdot 1$ since after the move is played,
 at least one white vertex and at least four (white) edges are recolored red.
 Then \St responds by playing her move $m_2$ which decreases the weight
 by~$\w_2\ge 11 + 15 = 26$ since her move results in at least one vertex
  and at least one edge recolored red.
 Thus,   $\w_1 + \w_2 \ge 75 + 26 = 101 > 48 \cdot 2$, and so
  Inequality~(\ref{Eq1}) is satisfied with $k = 2$.~\smallqed
\end{proof}

\medskip
By Claim~\ref{3unif}.A, we may assume that $\Delta(H) \le 3$, for otherwise Edge-hitter can achieve a $48$-target.

\begin{unnumbered}{Claim~\ref{3unif}.B}
If \Eh can play a vertex that results in a weight decrease of at least~$68$, then he can achieve a $48$-target.
\end{unnumbered}
\begin{proof}  Suppose that \Eh plays as his move $m_1$ a vertex in the residual hypergraph
  $H$ which results in  $\w_1 \ge 68 > 48\cdot 1$.
Then \St responds by playing her move
$m_2$ which decreases the weight by~$\w_2 \ge 11 + 15 + 2 \cdot 1 =
28$ since her move results in the vertex she played and at least one
edge recolored red, and at least two further vertices changing
color.  Therefore, $\w_1 + \w_2 \ge 68 + 28 = 96 = 48 \cdot 2$, and
so Inequality~(\ref{Eq1}) is satisfied with $k = 2$.~\smallqed
\end{proof}

\begin{unnumbered}{Claim~\ref{3unif}.C}
If \Eh can play a white vertex $v$ that results in at least one of
its neighbors recolored red, then \Eh can achieve a $48$-target.
\end{unnumbered}
\begin{proof}  If \Eh plays the vertex $v$ as his move $m_1$ in  $H$,
this results in $\w_1 \ge 1 \cdot 15 + 3 \cdot 15 + 11  = 71$ since
after the move is played, the vertex~$v$ and the three (white) edges
incident with $v$ are recolored red, while at least one neighbor of
$v$ is recolored red. Thus, by Claim~\ref{3unif}.B, \Eh can achieve
a $48$-target.~\smallqed
\end{proof}

\medskip
By Claim~\ref{3unif}.C, we may assume that there is no white vertex
which, when played, results in at least one of its neighbors
recolored red.

\begin{unnumbered}{Claim~\ref{3unif}.D}
If there exist two overlapping edges that contain a common white
vertex $v$, then Edge-hitter can achieve a $48$-target.
\end{unnumbered}
\begin{proof}   Since there are  no
multiple edges, the vertex $v$  has three, four or five neighbors.
\Eh plays the vertex $v$ as his move $m_1$ in the residual
hypergraph $H$. Suppose firstly that $|N(v)| = 3$. By our earlier
assumptions, no neighbor of $v$ is recolored red, implying that all
three neighbors of $v$ are white vertices in $H$ (of degree~$3$) and
are recolored blue once $v$ is played. Thus, in this case, $\w_1 \ge
1 \cdot 15 + 3 \cdot 15 + 3 \cdot 4  = 72$. Suppose secondly that
$|N(v)| = 4$. At least two neighbors of $v$ are recolored blue once
$v$ is played, implying that in this case, $\w_1 \ge 1 \cdot 15 + 3
\cdot 15 + 2 \cdot 4 + 2 \cdot 1 = 70$. Suppose thirdly that $|N(v)|
= 5$. At least one neighbor of $v$ is recolored blue once $v$ is
played, implying that in this case, $\w_1 \ge 1 \cdot 15 + 3 \cdot
15 + 1 \cdot 4 + 4 \cdot 1 = 68$. In all three cases, by
Claim~\ref{3unif}.B, \Eh can achieve a $48$-target.~\smallqed
\end{proof}

\medskip
By Claim~\ref{3unif}.D, we may assume that no white vertex belongs to the intersection of two overlapping edges. Recall that by our earlier assumptions, no white vertex which when played results in at least one of its neighbors recolored red. With these assumptions, the three edges that contain a white vertex are pairwise non-overlapping, implying that every white vertex has six neighbors. Further, these six neighbors are colored white or green.

\begin{unnumbered}{Claim~\ref{3unif}.E}
If a white vertex $v$ has a green neighbor $u$, then \Eh can achieve
a $48$-target.
\end{unnumbered}
\begin{proof}  By our earlier assumptions, $|N(v)| = 6$.
If \Eh plays the vertex $v$ as his move $m_1$ in $H$, this results
in the green neighbor $u$ recolored blue and five further neighbors
recolored. Therefore, $\w_1 \ge 1 \cdot 15 + 3 \cdot 15 + 1 \cdot 3
+ 5 \cdot 1 = 68$. Thus, by Claim~\ref{3unif}.B, \Eh can achieve a
$48$-target.~\smallqed
\end{proof}

\medskip
By Claim~\ref{3unif}.E, we may assume that every neighbor of a white
vertex  is colored white. With this assumption, every component of
$H$ is one of the following three types:

\indent
$\bullet$ \emph{Type-A:} A $3$-regular, linear hypergraph. \\
\indent
$\bullet$ \emph{Type-B:} A hypergraph with maximum degree~$2$. \\
\indent
$\bullet$ \emph{Type-C:} A hypergraph consisting of a single edge.

We remark that a Type-A component of $H$ consists entirely of white
vertices, while a type-B component consists only of green and blue
vertices, with at least one green vertex. A type-C component
consists of three blue vertices. Since a type-B component contains
only green and blue vertices, a move played in such a component
decreases the weight by at least~$11 + 15 + 2 \cdot 3 = 32$, since
at least one vertex and one edge is recolored red, and at least two
further vertices are recolored.  A move played in a Type-C component
decreases the weight by $3 \cdot 11 + 1 \cdot 15 = 48$, since three
blue vertices and one edge are recolored red. We state this formally
as follows.

\begin{unnumbered}{Claim~\ref{3unif}.F}
A move played in a type-B component decreases the weight by at
least~$32$,  while a move played in a Type-C component decreases the
weight by~$48$.
\end{unnumbered}

\begin{unnumbered}{Claim~\ref{3unif}.G}
If $H$ contains a white vertex, then \Eh can achieve a $48$-target.
\end{unnumbered}
\begin{proof}  Suppose that $H$ contains a white vertex, $v$,
that belongs to a component $F$. We note that $F$ is a type-A
component.   \Eh plays the vertex $v$ as his move $m_1$ in the
residual hypergraph $H$, which results in  $\w_1 \ge 1 \cdot 15 + 3
\cdot 15 + 6 \cdot 1 = 66 > 48\cdot 1$ since after the move is
played, the vertex~$v$ and three edges are recolored red, while all
six neighbors of $v$ are recolored green.
 Then \St responds by playing her move $m_2$. We note
that $F - v$ is a linear (possibly disconnected) hypergraph that
contains six green vertices with all other vertices colored white.
If \St plays her move $m_2$ in $F - v$ or in a Type-A component,
then $\w_2 \ge 14 + 2 \cdot 15 + 4 \cdot 1 = 48$ since her played
vertex (colored either white or green) and at least two edges are
recolored red, while at least four further vertices are recolored.
If \St plays her move $m_2$ in a Type-B component, then, by
Claim~\ref{3unif}.F, $\w_2 \ge 32$. If \St plays her move $m_2$ in a
Type-C component, then, by Claim~\ref{3unif}.F, $\w_2 \ge 48$. In
all cases, $\w_2 \ge 32$. Therefore, $\w_1 + \w_2 \ge 66 + 32 = 98 >
48 \cdot 2$, and so Inequality~(\ref{Eq1}) is satisfied with $k =
2$.~\smallqed
\end{proof}

\medskip
By Claim~\ref{3unif}.G, we may assume that every vertex is colored
green or blue; that is, every component of $H$ is of Type-B or
Type-C. We have seen in Claim~\ref{3unif}.F that in this situation
\St can never make a decrease smaller than $32$. Thus, we obtain:

\begin{unnumbered}{Claim~\ref{3unif}.H}
If \Eh can play a vertex that results in a weight decrease of at least~$64$, then he can achieve a $48$-target.
\end{unnumbered}

\begin{unnumbered}{Claim~\ref{3unif}.I}
If $H$ contains two overlapping edges, then \Eh can achieve a $48$-target.
\end{unnumbered}
\begin{proof}
 Let $e$ and $f$ be two overlapping edges, with $e \cap f = \{v_1,v_2\}$.
 \Eh plays the vertex $v_1$ as his move
$m_1$ in the residual hypergraph $H$, which results in $\w_1 \ge 2
\cdot 14 + 2 \cdot 15 + 2 \cdot 3 = 64$ since after the move is
played, both vertices $v_1$ and $v_2$ (currently colored green) and
two edges are recolored red, while at least two further vertices are
recolored (from green to blue, or from blue to red). Thus, by
Claim~\ref{3unif}.H, \Eh can achieve a $48$-target.~\smallqed
\end{proof}

\medskip
By Claim~\ref{3unif}.I, we may assume that every Type-B component is linear. Thus, $H$ is a linear hypergraph.

\begin{unnumbered}{Claim~\ref{3unif}.J}
If $H$ contains a green vertex $v$ with a blue neighbor $u$, then
\Eh can achieve a $48$-target.
\end{unnumbered}
\begin{proof}   Since $H$ is linear, we note that $|N(v)| = 4$.
Playing the vertex~$v$ results in $\w_1 \ge 1 \cdot 14 + 2 \cdot 15
+ 11 + 3 \cdot 3 = 64$, since after the move is played, the
vertex~$v$ and two edges are recolored red, and $u$ is recolored
red. Thus, by Claim~\ref{3unif}.H, \Eh can achieve a
$48$-target.~\smallqed
\end{proof}

\medskip
By Claim~\ref{3unif}.J, we may assume that each component of $H$ is
either a $2$-regular linear hypergraph (consisting entirely of green
vertices) or an isolated edge (consisting of three blue vertices).
Playing a vertex from an isolated edge decreases the weight by $48$,
therefore we obtain:

\begin{unnumbered}{Claim~\ref{3unif}.K}
If every component in the residual hypergraph is an isolated edge, then \Eh can achieve a $48$-target.
\end{unnumbered}

\medskip
By Claim~\ref{3unif}.K, we may assume that at least one component,
say $F$, of $H$ is a $2$-regular, linear hypergraph. Edge-hitter now
plays in such a way as to restrict his moves to vertices in $V(F)$,
independently of Staller's responses to his moves, as long as a
green vertex in $V(F)$ exists. Further, among all green vertices in
$V(F)$ at each stage of the game, \Eh selects a green vertex with as
many blue neighbors as possible.

Suppose that a total of $\ell$ green vertices in $V(F)$ are played
by Edge-hitter.  The first move, $m_1$, of \Eh results in $\w_1 = 1
\cdot 14 + 2 \cdot 15 + 4 \cdot 3 = 56$. Thereafter, each subsequent
move $m_{2i+1}$ of Edge-hitter, where $i \in \{1,\ldots,\ell - 1\}$,
results in $\w_{2i+1} \ge 1 \cdot 14 + 2 \cdot 15 + 1 \cdot 11 + 3
\cdot 3 = 64$, since the subsequent (green) vertices played by \Eh
in $V(F)$ can all be chosen to have at least one blue neighbor. By
Claim~\ref{3unif}.F,   each of Staller's moves $m_{2i}$, where $i
\in \{1,\ldots,\ell-1\}$, result in $\w_{2i} \ge 32$. If the game is
complete after Edge-hitter's $\ell$th move, then

\[
\begin{array}{lcl}
\displaystyle{ \sum_{i=1}^{2\ell-1} \w_i} & =
& \displaystyle{ \sum_{i=0}^{\ell-1} \w_{2i+1} + \sum_{i=1}^{\ell - 1} \w_{2i} } \1 \\
& \ge & \displaystyle{ 56 + 64(\ell - 1)  + 32(\ell - 1) }  \1 \\
& > & 48 \cdot (2\ell - 1).
\end{array}
\]

\noindent Thus, Inequality~(\ref{Eq1}) is satisfied with $k = 2\ell
- 1$  and the game is completed after move $m_k$. Hence, we may
assume that the game is not complete after Edge-hitter's $\ell$th
move. We show next that Inequality~(\ref{Eq1}) is satisfied with $k
= 2\ell$. We consider the sequence of $\ell$ vertices played by \St
in response to Edge-hitter's $\ell$ moves. As observed earlier,
every move played by Staller decreases the weight by at least~$32$.

\begin{unnumbered}{Claim~\ref{3unif}.L}
If one of Staller's moves in response to Edge-hitter's $\ell$ moves is not a blue vertex in $V(F)$ with two green neighbors, then \Eh can achieve a $48$-target.
\end{unnumbered}
\begin{proof}  Suppose that \St plays a move that is not a blue vertex
in $V(F)$ with two green neighbors. We consider the four possible
moves of Staller. If at least one of the $\ell$ vertices played by
\St does not belong to $V(F)$, then her first such played vertex
either belongs to a component of $H$, different from $F$, that is a
$2$-regular, linear hypergraph or belongs to a component of $H$ that
is an isolated edge. In the former case, her move decreases the
weight by~$56$, while in the latter case, her move decreases the
weight by~$48$. If \St plays a green vertex in $V(F)$, then her move
decreases the weight by at least~$56$. If \St plays a blue vertex in
$V(F)$ that has at least one blue neighbor, then her move decreases
the weight by at least~$2 \cdot 11 + 1 \cdot 15 + 3 = 40$. In all
four cases, Staller's move decreases the weight by at least~$40$,
while the other $\ell - 1$ moves played by her each decrease the
weight by at least~$32$, implying that

\[
\begin{array}{lcl}
\displaystyle{ \sum_{i=1}^{2\ell} \w_i} & =
& \displaystyle{ \sum_{i=0}^{\ell-1} \w_{2i+1} + \sum_{i=1}^{\ell} \w_{2i} } \1 \\
& \ge & \displaystyle{ \left( 56 + 64(\ell - 1) \right)   + \left( 40 + 32(\ell - 1) \right) }  \1 \\
& = & 48 \cdot (2\ell).
\end{array}
\]
Thus, Inequality~(\ref{Eq1}) is satisfied with $k = 2\ell$.~\smallqed
\end{proof}

\medskip
By Claim~\ref{3unif}.L, we may assume that each move played by \St in response to Edge-hitter's $\ell$ moves is a blue vertex in $V(F)$ with two green neighbors. Thus, each of the $\ell$ moves of \St decreases the weight by exactly~$32$.

\begin{unnumbered}{Claim~\ref{3unif}.M}
At least one move played by \Eh is a green vertex in $V(F)$ with at least two blue neighbors.
\end{unnumbered}
\begin{proof}  Suppose that none of the $\ell$ moves played by \Eh is a green
vertex in $V(F)$  with at least two blue neighbors. Then, every move
of Edge-hitter, except for his first move, plays a green vertex with
exactly one blue neighbor and three green neighbors. Thus, the first
move of \Eh recolors exactly five green vertices, while each of the
subsequent $\ell - 1$ moves of \Eh  recolors exactly four green
vertices. By our earlier assumption, each move played by \St in
response to Edge-hitter's $\ell$ moves is a blue vertex in $V(F)$
with two green neighbors. Thus, each move of \St recolors exactly
two green vertices. Therefore,  $|V(F)| = 5 + 4(\ell - 1) + 2\ell =
6\ell + 1$. However, $F$ is a $2$-regular, $3$-uniform hypergraph,
and so $m(F) = \frac{2}{3}|V(F)|$, implying that $|V(F)|$ must be
divisible by~$3$, a contradiction.~\smallqed
\end{proof}

\medskip
By Claim~\ref{3unif}.M, at least one of the $\ell$ moves played by \Eh is a green vertex in $V(F)$ with at least two blue neighbors. Such a move decreases the weight by at least $1 \cdot 14 + 2 \cdot 11 + 2 \cdot 15 + 2 \cdot 3 = 72$, implying that

\[
\begin{array}{lcl}
\displaystyle{ \sum_{i=1}^{2\ell} \w_i} & =
& \displaystyle{ \sum_{i=0}^{\ell-1} \w_{2i+1} + \sum_{i=1}^{\ell} \w_{2i} } \1 \\
& \ge & \displaystyle{ (56 + 72 + 64(\ell - 2) ) + 32\ell }  \1 \\
& = & 48 \cdot (2\ell).
\end{array}
\]
Thus, Inequality~(\ref{Eq1}) is satisfied with $k = 2\ell$. This completes the proof of Theorem~\ref{3unif}.~\qed
\end{proof}

\medskip
For a hypergraph $H$, we let $n_{\ge 3}(H)$ denote the number of vertices of degree at least~$3$ in $H$. Further, we let $n_2(H)$ and $n_1(H)$ denote the number of vertices of degree~$2$ and~$1$, respectively, in $H$. We observe that Theorem~\ref{3unif} can be restated as follows.

\begin{thm}
\label{3uniformA}
If $H$ is a $3$-uniform hypergraph, then
\[
48 \dstart(H) \le 15n_{\ge 3}(H) + 14n_2(H) + 11n_1(H) + 15\mH.
\]
\end{thm}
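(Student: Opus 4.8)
The plan is to recognize that Theorem~\ref{3uniformA} is simply Theorem~\ref{3unif} with the vertex contributions grouped by degree rather than by color. Nothing new needs to be proved about the game itself; the entire task is to check that the color classes appearing in the weight function $\w$ coincide with the degree classes $n_{\ge 3}(H)$, $n_2(H)$, $n_1(H)$ for the starting configuration, and that reducing an arbitrary $3$-uniform hypergraph to a residual one changes neither side of the inequality. So the first thing I would do is invoke the remark from the Introduction that any hypergraph, before any move has been made, is itself a residual hypergraph in which every edge is uncovered (hence white). In that starting configuration the color of each vertex $v$ is dictated entirely by its degree: $v$ is white precisely when $d_H(v)\ge 3$, green precisely when $d_H(v)=2$, and blue precisely when $d_H(v)=1$. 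This yields the three identities $|\nW| = n_{\ge 3}(H)$, $|\nGr| = n_2(H)$, and $|\nB| = n_1(H)$.

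The one point requiring care is the presence of degree-$0$ vertices, which an arbitrary $3$-uniform hypergraph may contain but a residual hypergraph may not. I would dispose of these by observing that an isolated vertex can never constitute a legal move, since a legal move must hit an edge not previously hit; therefore deleting all isolated vertices leaves $\dstart(H)$ unchanged. These vertices also contribute to none of $n_{\ge 3}(H)$, $n_2(H)$, $n_1(H)$, and do not affect $\mH$, so the right-hand side of the claimed bound is likewise untouched by their removal. After deleting them, $H$ is a genuine residual hypergraph, all of whose edges are white, and the three color/degree identities above apply verbatim.

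Finally I would substitute these identities into the weight formula $\w(H) = 15|\nW| + 14|\nGr| + 11|\nB| + 15\mH$, obtaining $\w(H) = 15n_{\ge 3}(H) + 14n_2(H) + 11n_1(H) + 15\mH$, and then apply Theorem~\ref{3unif}, which gives $48\dstart(H) \le \w(H)$, to conclude the stated inequality. I do not expect a genuine obstacle here: the content is purely a translation of bookkeeping, and the only thing one could overlook is exactly the degree-$0$ case, since it is the single place where an arbitrary hypergraph and its residual form differ. The deletion argument above neutralizes that pitfall, after which the proof is a one-line substitution followed by a citation of Theorem~\ref{3unif}.
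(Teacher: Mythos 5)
Your proposal is correct and matches the paper exactly: the paper offers no separate proof, simply observing that Theorem~\ref{3uniformA} is Theorem~\ref{3unif} restated, since in the initial position every edge is white and the colors white, green, blue correspond precisely to degrees $\ge 3$, $2$, $1$. Your extra care with degree-$0$ vertices is consistent with the paper's framework (such vertices are red, carry weight~$0$, and are deleted when forming the residual hypergraph), so it changes nothing.
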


Since the right side is at most $15 \nH + 15\mH$,
Theorem~\ref{3uniform} is an immediate consequence of
Theorem~\ref{3unif} and Theorem~\ref{3uniformA}. Recall the
statement of Theorem~\ref{3uniform}.

\noindent \textbf{Theorem~\ref{3uniform}}. \emph{If $H$ is a $3$-uniform hypergraph, then  $\dstart(H) \le \frac{5}{16}(\nH + \mH)$.
}

As a further consequence of Theorem~\ref{3uniformA}, we have the following upper bound on the game transversal number of a $3$-uniform hypergraph with maximum degree at most~$2$.

\begin{cor}
If $H$ is a $3$-uniform hypergraph and $\Delta(H) \le 2$, then the following holds. \1 \\
\indent {\rm (a)} $\dstart(H) \le \frac{3}{10}(\nH+\mH)$. \1 \\
\indent {\rm (b)}  $\dstart(H) \le \frac{1}{2}\nH$. \1 \\
\indent {\rm (c)} $\dstart(H) \le \frac{3}{4}\mH$ if\/ $H$ is\/ $2$-regular.
\label{c:Delat2}
\end{cor}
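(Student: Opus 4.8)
The plan is to derive all three inequalities directly from Theorem~\ref{3uniformA}, which for a $3$-uniform hypergraph asserts that $48\dstart(H) \le 15n_{\ge 3}(H) + 14n_2(H) + 11n_1(H) + 15\mH$. Since $\Delta(H) \le 2$ by hypothesis, there are no vertices of degree at least~$3$, so $n_{\ge 3}(H) = 0$ and the bound simplifies to
\[
48\dstart(H) \le 14n_2(H) + 11n_1(H) + 15\mH.
\]
The second ingredient is the incidence (degree-sum) identity for a $3$-uniform hypergraph: counting vertex-edge incidences in two ways gives $\sum_{v} d_H(v) = 3\mH$, and because every vertex has degree $0$, $1$, or $2$, this reads $n_1(H) + 2n_2(H) = 3\mH$. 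Writing $n_1 = n_1(H)$ and $n_2 = n_2(H)$ for brevity, each of the three parts is then a short algebraic consequence of these two relations together with the trivial bound $\nH \ge n_1 + n_2$ (any isolated vertices only enlarge $\nH$).

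For part~(b) I would substitute $15\mH = 5(n_1 + 2n_2) = 5n_1 + 10n_2$ into the simplified bound to get $48\dstart(H) \le 16n_1 + 24n_2$; since $16n_1 \le 24n_1$ and $\nH \ge n_1 + n_2$, the right side is at most $24\nH$, yielding $\dstart(H) \le \frac{1}{2}\nH$. For part~(c), $2$-regularity forces $n_1 = 0$ and $n_2 = \nH$, and the identity then gives $\nH = \frac{3}{2}\mH$; substituting, $48\dstart(H) \le 14n_2 + 15\mH = 21\mH + 15\mH = 36\mH$, hence $\dstart(H) \le \frac{3}{4}\mH$. For part~(a) I would clear denominators: the claim is equivalent to $70n_2 + 55n_1 + 75\mH \le 72(\nH + \mH)$, and bounding $\nH \ge n_1 + n_2$ reduces this to $3\mH \le 17n_1 + 2n_2$, which holds since $3\mH = n_1 + 2n_2 \le 17n_1 + 2n_2$.

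There is essentially no hard step here: the corollary is a packaging of Theorem~\ref{3uniformA} through the incidence count, and the only point requiring care is the treatment of isolated (degree-$0$) vertices, which do not appear on the right-hand side of Theorem~\ref{3uniformA} but do contribute to $\nH$. Since they can only increase $\nH$, the bounds in (a) and (b) remain valid, while in (c) the $2$-regularity hypothesis rules them out. One should also recall, as noted at the start of this section, that multiple edges may be deleted without changing $\dstart(H)$, so the identity $n_1 + 2n_2 = 3\mH$ may be applied to the simple hypergraph obtained after such deletions.
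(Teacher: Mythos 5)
Your proposal is correct and is essentially the paper's argument: both derive all three parts as direct algebraic consequences of Theorem~\ref{3uniformA} combined with the degree-sum count for $\Delta(H) \le 2$. The only (immaterial) difference is bookkeeping: the paper coarsens the vertex weights to $14\nH$ and uses $\mH \le \frac{2}{3}\nH$ to chain $\dstart(H) \le \frac{1}{48}(14\nH+15\mH) \le \frac{3}{10}(\nH+\mH) \le \frac{1}{2}\nH$, with (c) as the $2$-regular equality case, whereas you keep $n_1$ and $n_2$ separate and verify each part from the exact identity $n_1 + 2n_2 = 3\mH$.
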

\begin{proof} If $H$ is a $3$-uniform hypergraph and $\Delta(H) \le 2$, then $\mH \le \frac{2}{3} \nH$, implying, by  Theorem~\ref{3uniformA},
that $\dstart(H) \le \frac{1}{48}(14\nH + 15\mH) \le \frac{3}{10}(\nH+\mH) \le \frac{1}{2}\nH$,
  which is equal to $\frac{3}{4} \mH$ whenever $H$ is 3-uniform and 2-regular.~\qed
\end{proof}

A small example attaining equation in all of Theorem~\ref{3uniformA}
and Corollary~\ref{c:Delat2} (a)--(c) is shown in
Figure~\ref{fig:2}.

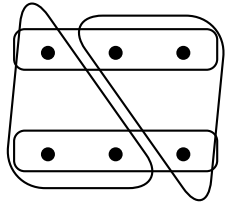
\begin{figure}[t]
\begin{center}

\begin{tikzpicture}[scale=0.9,style=thick,x=1cm,y=1cm]
\def\vr{2.5pt} 
\path (0,0) coordinate (x1); \path (1,0) coordinate (x2); \path
(2,0) coordinate (x3); \path (0,1.5) coordinate (y1); \path (1,1.5)
coordinate (y2); \path (2,1.5) coordinate (y3);
%
\draw (x1) [fill=black] circle (\vr); \draw (x2) [fill=black] circle
(\vr); \draw (x3) [fill=black] circle (\vr); \draw (y1) [fill=black]
circle (\vr); \draw (y2) [fill=black] circle (\vr); \draw (y3)
[fill=black] circle (\vr); \draw [rounded corners] (-0.5,-0.25)
rectangle (2.5,0.35) node [black,right] {}; \draw [rounded corners] (-0.5,1.25) rectangle (2.5,1.85) node [black,right] {}; %
\draw [rounded corners=5.5mm] (-0.65,-0.5)--(-0.35,
2.55)--(1.8,-0.5)--cycle;
 \draw [rounded corners=5.5mm]
(0.2,2.05)--(2.65, 2.05)--(2.35,-1)--cycle;

\end{tikzpicture}
\end{center}
\vskip -0.25cm \caption{The $3$-uniform hypergraph $H$ with $\nH=6$,
$\mH=4$ and $\dstart(H)=3$.} \label{fig:2}
\end{figure}

\medskip
For the Staller-start game, we have the following consequence of Theorem~\ref{3uniform}.

\begin{cor}
If $H$ is a $3$-uniform hypergraph, then $\sstart(H) \le \frac{1}{16}(5\nH + 5\mH + 6)$. \label{cor-3uniform}
\end{cor}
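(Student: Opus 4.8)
The plan is to condition on Staller's opening move and feed the resulting residual hypergraph into the weighted bound of Theorem~\ref{3unif}. A direct appeal to Theorem~\ref{thm:difference1} together with Theorem~\ref{3uniform} only yields $\sstart(H)\le \frac{5}{16}(\nH+\mH)+1=\frac{1}{16}(5\nH+5\mH+16)$, which is weaker than the claimed bound by $\frac{10}{16}$, so something sharper is needed. We may assume $\mH\ge 1$, since otherwise $\sstart(H)=0$ and the inequality is immediate.

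Let $v$ be an optimal first move of Staller in the Staller-start game, and let $A$ be the set of edges of $H$ hit by $v$, so that $|A|\ge 1$ and $\sstart(H)=1+\dstart(H|A)$. The hypergraph $H|A$ is again a $3$-uniform residual hypergraph, hence Theorem~\ref{3unif} gives $48\,\dstart(H|A)\le \w(H|A)$. Writing $\w_1=\w(H)-\w(H|A)$ for the weight drop caused by $v$, and recalling (as in Theorem~\ref{3uniformA}) that $\w(H)=15\,n_{\ge 3}(H)+14\,n_2(H)+11\,n_1(H)+15\mH$, I would obtain
\[
48\,\sstart(H)=48+48\,\dstart(H|A)\le 48+\w(H)-\w_1 .
\]

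The heart of the argument is the inequality
\[
\w_1\ \ge\ 30-n_2(H)-4\,n_1(H),
\]
which I would establish by a short case analysis on the colour of $v$. If $v$ is white it covers at least three edges and $\w_1\ge 15+3\cdot 15=60$; if $v$ is green it covers two edges and $\w_1\ge 14+2\cdot 15=44$; in both cases the right-hand side (at most $30$) is dominated. If $v$ is blue then it has degree~$1$, so $n_1(H)\ge 1$ and the right-hand side is at most $30-4=26$, which is met because every legal move recolours its own vertex (weight $\ge 11$) and at least one edge (weight $15$) red, whence $\w_1\ge 26$. Substituting this into the previous estimate gives $48\,\sstart(H)\le 48+\w(H)-\bigl(30-n_2(H)-4\,n_1(H)\bigr)=15\nH+15\mH+18$, which is exactly $\sstart(H)\le \frac{1}{16}(5\nH+5\mH+6)$.

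The main obstacle is conceptual rather than computational: because Staller maximises, she may well prefer an opening move with a \emph{small} weight drop, leaving more weight for the continuation, so one cannot simply assume that the first move removes a large amount of weight. The point that makes the bookkeeping close is that the only way the drop can fall as low as $26$ is for $v$ to be a blue (degree-$1$) vertex, and the mere existence of such a vertex forces $n_1(H)\ge 1$; since blue vertices carry weight $11$ rather than $15$, the global estimate $\w(H)\le 15\nH+15\mH$ is slack by precisely the amount $n_2(H)+4\,n_1(H)$ needed to absorb the shortfall. Checking that this compensation suffices in each colour case, rather than the routine algebra, is the step that requires attention.
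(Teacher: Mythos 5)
Your proof is correct: the identity $\sstart(H) = 1 + \dstart(H|A)$ for Staller's optimal opening move, the application of Theorem~\ref{3unif} to the residual hypergraph $H|A$, the case analysis giving $\w_1 \ge 30 - n_2(H) - 4n_1(H)$ (white: $\w_1 \ge 60$; green: $\w_1 \ge 44$; blue: $\w_1 \ge 26$ while $n_1(H)\ge 1$ caps the right side at $26$), and the final arithmetic $48\,\sstart(H) \le 18 + 15\nH + 15\mH$ all check out. However, it takes a noticeably heavier route than the paper, which shares the same skeleton (peel off Staller's first move, then invoke the Edge-hitter-start bound on what remains) but quantifies the first move in the crudest possible currency: Staller's opening move deletes at least one vertex and at least one edge from the residual hypergraph, so $\nHp + \mHp \le \nH + \mH - 2$, whence Theorem~\ref{3uniform} applied to the residual $H'$ gives $\sstart(H) = 1 + \dstart(H') \le 1 + \frac{5}{16}(\nH + \mH - 2) = \frac{1}{16}(5\nH + 5\mH + 6)$ directly. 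In particular, the ``conceptual obstacle'' you flag --- that Staller may prefer an opening move with a small weight drop --- dissolves entirely at this coarser level of accounting, since every legal move, however stingy, removes a full vertex and a full edge, and $\frac{5}{16}\cdot 2 = \frac{10}{16}$ is exactly the saving needed to turn the naive $+1$ into $+\frac{6}{16}$; your opening remark that ``something sharper is needed'' beyond Theorem~\ref{thm:difference1} is true, but the sharper ingredient need not be the weight function. What your detour through Theorem~\ref{3unif} buys is a slightly refined conclusion sensitive to the degree distribution, in the spirit of Theorem~\ref{3uniformA}, though for the corollary as stated it reproves what the drop of $2$ in $\nH+\mH$ gives for free. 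One cosmetic point: your final ``$=15\nH+15\mH+18$'' should be ``$\le$'' when $H$ has isolated vertices, since those carry weight $0$; this does not affect the conclusion.
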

\begin{proof} The first move of \St decreases $\nH + \mH$  by at least~$2$, since at least
 one vertex and one edge are deleted by her move. Let $H'$ denote the resulting residual hypergraph. Then $\nHp + \mHp \le \nH + \mH - 2$. By Theorem~\ref{3uniform},
\[
\begin{array}{lcl}
\sstart(H) & = & 1 + \dstart(H') \1 \\
&  \le & 1 + \frac{5}{16}(\nHp + \mHp) \1 \\
&  \le & 1 + \frac{5}{16}(\nH + \mH - 2) \1 \\
& = & \frac{1}{16}( 5\nH + 5\mH + 6). \hspace*{0.5cm} \Box
\end{array}
\]
\end{proof}

\subsection{Proof of Theorem~\ref{4uniform}}
\label{S:4uniform}

 In this section, we prove
Theorem~\ref{4uniform}. Again, we consider colored hypergraphs,
where each edge and vertex is associated with a color.
 The colors of edges and vertices
  may change as the set $D$ of chosen vertices is
extended during the game.

An edge is colored \emph{white} if it is not covered by a vertex of
$D$, and is colored \emph{red} otherwise.  From the partially
covered hypergraph  red edges and isolated vertices are deleted.
This way we obtain the residual hypergraph.

Each vertex of  the  hypergraph is associated with one from the
following five colors:  white, yellow, green, blue, and red. This
coloring reflects to the degree of the vertex in the residual
hypergraph; that is, the number of white edges incident to it.
\begin{itemize}
\item A vertex is colored \emph{white} if it has degree at least~$4$.
\item A vertex is colored \emph{yellow} if it has degree~$3$.
\item A vertex is colored \emph{green} if it has degree~$2$.
\item A vertex is colored \emph{blue} if it has degree~$1$.
\item A vertex is colored \emph{red} if it is not incident with any
white edges or equivalently, if it is deleted from the residual
hypergraph.
\end{itemize}

We now define the parameter $\Delta^*(H)$ of the residual hypergraph
$H$  of the game as follows. If Edge-hitter is the next player to
make a move on $H$, then $\Delta^*(H)$ is the maximum degree,
$\Delta(H)$, of $H$. Otherwise, if Staller is the next player to
make a move on $H$, then $\Delta^*(H)$ denotes the maximum degree of
the residual hypergraph before Edge-hitter made his previous move.
We associate a weight with every vertex in the residual hypergraph
$H$ that depends on $\Delta^*(H)$ and on the color of the vertex in
$H$.

\begin{center}
\begin{tabular}{|c|c|c|c|c|c|}  \hline
\emph{Color of vertex} & \emph{Degree in $H$} & \multicolumn{4}{c}{\emph{Weight of vertex}} \vline \\
\cline{3-6}
   & &$\Delta^*(H) \ge 5$ & $\Delta^*(H) =4$ & $\Delta^*(H) =3$ & $\Delta^*(H)
    \le 2$ \\ \hline
white & $\ge 4$ &$852$ & $852$ & -- & -- \\
yellow & $3$ & $852$ & $845$ & $845$ & -- \\
green & $2$ & $852$  & $838$ & $750$ & $750$ \\
blue & $1$ & $852$  & $831$ & $655$ & $543$ \\
red & --- & $0$ & $0$ & $0$ & $0$  \\ \hline
\end{tabular}
\end{center}
\begin{center}
\textbf{Table~2.} The weights of vertices according to their color
and $\Delta^*(H)$.
\end{center}

Further, the weight of an edge is $852$ if it is white, and $0$ if it is red. We shall prove the following key theorem. From our earlier observations, it suffices for us to prove Theorem~\ref{4unif} in the case of hypergraphs with no multiple edges.

\begin{thm}
If $H$ is a $4$-uniform residual hypergraph, then $3024 \dstart(H) \le \w(H)$.
 \label{4unif}
\end{thm}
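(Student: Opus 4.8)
The plan is to mirror the weighting-and-discharging argument used for Theorem~\ref{3unif}, but with the more elaborate weight table (Table~2) whose entries now depend both on a vertex's color and on the parameter $\Delta^*(H)$. The target constant is $3024 = 63 \cdot 48$, and I would again have Edge-hitter \emph{achieve a $3024$-target}: he plays a sequence of moves $m_1,\dots,m_k$, alternating with Staller, so that if $\w_i$ is the weight decrease after $m_i$, then $\sum_{i=1}^k \w_i \ge 3024\,k$, where $k$ is odd with the game completed after $m_k$, or $k$ is even. Since each move recolors at least one vertex and one edge to red, every move already forces some guaranteed minimum decrease; the whole proof is a sequence of claims, each either exhibiting a move that achieves the target outright or establishing a structural reduction (ruling out high degree, overlapping edges, mixed-color neighborhoods) that pushes the residual hypergraph toward a rigid, near-regular, linear form.

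First I would establish the easy high-payoff claims. If $\Delta(H) \ge 5$, Edge-hitter plays a max-degree vertex; one white vertex plus at least five white edges are recolored, giving $\w_1 \ge 852 + 5\cdot 852$, far above $3024$, and Staller's forced reply keeps the two-move average above $3024$. This lets me assume $\Delta(H) \le 4$, which is exactly what the $\Delta^*(H) = 4$ column of Table~2 is calibrated for. I would then prove the analogue of Claim~\ref{3unif}.B (a single move with decrease above some threshold, here roughly $2\cdot 3024$ minus Staller's guaranteed minimum, suffices), and use it to knock out in turn: white vertices with a neighbor recolored red, white vertices lying in overlapping edges, and white vertices with non-white neighbors. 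The role of the parameter $\Delta^*(H)$ is the genuinely new ingredient: because weights \emph{rise} as $\Delta^*$ drops, Edge-hitter's move can trigger a favorable reweighting of every surviving vertex in a component, and I would have to account for these bonus increases carefully — Staller's reply is weighted using the $\Delta^*$ value \emph{before} Edge-hitter's preceding move, precisely so that the bookkeeping across a paired (Edge-hitter, Staller) move remains valid.

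Having forced every vertex white-free, every component is $4$-regular linear (Type-A), low-degree (Type-B), or a single edge (Type-C); I would record, as in Claim~\ref{3unif}.F, uniform lower bounds on the decrease caused by any move inside Type-B and Type-C components, and eliminate remaining white vertices and overlaps exactly as before. The endgame concentrates on a $3$-regular or $2$-regular linear component $F$, where Edge-hitter restricts his moves to $V(F)$, always choosing a vertex with the most neighbors of lower color, independently of Staller. I would split into the cases where the game ends on an odd move (then the first-move deficit is absorbed by the surplus of later Edge-hitter moves) versus continuing to an even move, and then, as in Claims~\ref{3unif}.L–M, argue that if Staller always plays the cheapest possible response then a parity/counting contradiction (using $m(F) = \tfrac{r}{4}|V(F)|$ for the $r$-regular $4$-uniform $F$, forcing divisibility of $|V(F)|$) guarantees Edge-hitter makes at least one extra-heavy move recovering the balance.

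\textbf{The main obstacle} I expect is the sheer case proliferation induced by $\Delta^* \in \{\ge 5, 4, 3, \le 2\}$ crossed with five vertex colors: unlike the $3$-uniform case, where one fixed table sufficed, here I must verify that every bound claimed for a paired move holds \emph{simultaneously} under the correct $\Delta^*$-regime, and that the reweighting bonuses are neither double-counted nor lost when a move causes $\Delta^*$ to drop across a component boundary. Checking that the peculiar constants in Table~2 (e.g.\ $852, 845, 838, 831$ for $\Delta^*=4$, and $750, 655, 543$ lower down) are exactly the values making every one of these paired inequalities meet $3024$ — with the final linear-component counting argument closing the last gap — is where the real work lies; the rest is a faithful, if lengthier, transcription of the $3$-uniform template.
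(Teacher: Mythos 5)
Your overall template (weight function, a $3024$-target with paired-move accounting, eliminating large degrees first, an endgame on a regular linear component where Edge-hitter plays a green vertex with the most blue neighbors, closed by one extra-heavy move) does match the paper. But there are three concrete problems. First, you have the mechanism of Table~2 backwards: the weights \emph{decrease}, not rise, as $\Delta^*$ drops (e.g.\ blue goes $852 \to 831 \to 655 \to 543$), and this monotonicity is essential, since otherwise a regime change would cause a weight \emph{increase} mid-game and the telescoping bound $\sum_i \w_i \le \w(H)$ together with the per-move lower bounds would collapse. The actual calibration is that within the $\Delta^* = 4$ (resp.\ $3$, $\le 2$) regime each unit drop in a vertex's degree costs at least $7$ (resp.\ $95$, $207$) weight, and the column-change bonus accrues across Staller's move (her $\Delta^*$ is frozen at the value before Edge-hitter's preceding move). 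Second, and more importantly, your plan to transcribe the $3$-uniform structural claims — eliminating white vertices in overlapping edges, white vertices with non-white neighbors, then analyzing $4$-regular linear ``Type-A'' components — misses the whole point of the $\Delta^*$-graded weights: with Table~2, Edge-hitter kills $\Delta(H) = 4$ and $\Delta(H) = 3$ \emph{unconditionally} in single paired-move computations ($\w_1 + \w_2 \ge 4344 + 1704 = 6048$ and $4256 + 1792 = 6048$ respectively), with no linearity or neighbor-color hypotheses needed. Consequently no Type-A stage and no degree-$3$ endgame ever arise; your version would leave yellow (degree-$3$) vertices alive into the endgame, where your tools do not suffice.

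Third, your endgame closer is wrong for the $4$-uniform case. The divisibility argument of Claim~\ref{3unif}.M does not carry over: repeating that count on a $2$-regular linear component, the cheapest play pattern consumes $7 + 6(s-1) + 3s = 9s+1$ green vertices, while $2$-regularity and $4$-uniformity only force $\nH = 2\mH$ to be even — so one gets a contradiction only when $s$ is even, and odd $s$ survives. This is precisely why the paper replaces counting by two different arguments: Claim~\ref{4unif}.F uses the linearity of the component to show that if no green vertex survives Edge-hitter's $s$th move, that move already had two blue neighbors; and the final case analyzes the $13$-vertex set $S = e_1 \cup e_2 \cup e_3 \cup \{v\}$ around Staller's last move and applies the Pigeonhole Principle (Edge-hitter's $s$th move recolors at most six vertices of $S$, so at least four blue vertices predate it, forcing some $u_i$ to have had two blue neighbors) to certify the heavy move $\w_{2s-1} \ge 4368$. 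Without a replacement for the failed divisibility step, your proof does not close.
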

\begin{proof} If $\mH = 0$, then $\dstart(H) = 0$ and the desired bound is immediate. Hence we may assume that $\mH \ge 1$. We say that \Eh can \emph{achieve a $3024$-target} if he can play a sequence of moves guaranteeing that on average the weight decrease resulting from each played vertex in the game is at least~$3024$. In order to achieve a $3024$-target, \Eh must guarantee that a sequence of moves $m_1, \ldots, m_k$ are played, starting with his first move $m_1$, and with moves alternating between \Eh and \St such that if $\w_i$ denotes the decrease in weight after move $m_i$ is played, then
\begin{equation}
\sum_{i = 1}^k \w_i \ge 3024 \cdot k\,, \label{Eq2}
\end{equation}
where either $k$ is odd and the game is completed after move $m_k$
or  $k$ is any even number (in this latter case the game may or may
not be completed after move $m_k$).

We will analyze how  \Eh can achieve a $3024$-target. Similarly to
the proof of Theorem \ref{3unif}, there is a trivial situation,
 namely when \Eh\ can play a vertex that covers all remaining edges,
  and the current value of the residual hypergraph is at least 3024.
 Then the 3024-target is achieved with $k=1$.
This may happen in several cases below.
 We shall not mention it each time, we only discuss what happens otherwise.

 We prove a series of claims that establish important properties that hold in
the residual hypergraph $H$.

\begin{unnumbered}{Claim~\ref{4unif}.A}
If $\Delta(H) \ge 5$, then Edge-hitter can achieve a $3024$-target.
\end{unnumbered}
\begin{proof}
   Let $v$ be a (white) vertex of
degree at least~$5$ in $H$. If  \Eh plays the vertex $v$, this
results in at least five edges  recolored red. Moreover, the white
vertex $v$ is recolored red. Hence, since $\Delta^*(H) = \Delta(H)
\ge 5$ immediately before Edge-hitter plays~$v$,  $\w_1 \ge 5\cdot
852 + 1 \cdot 852 = 5112 > 3024 \cdot 1$.
%
   Similarly, $\Delta^*(H) \ge 5$ before Staller makes her next move.
Thus, Staller's move $m_2$ decreases the weight by $\w_2 \ge 852 +
852 = 1704$ and we have $\w_1 + \w_2 \ge 5112 + 1704 = 6816 > 3024
\cdot 2$. Therefore, Inequality~(\ref{Eq2}) is satisfied with $k =
2$.~\smallqed
\end{proof}

By Claim~\ref{4unif}.A, we may assume $\Delta(H) \le 4$, for otherwise Edge-hitter can achieve a $3024$-target.

\begin{unnumbered}{Claim~\ref{4unif}.B}
If $\Delta(H) =4$, then Edge-hitter can achieve a $3024$-target.
\end{unnumbered}
\begin{proof}
Suppose that \Eh plays a  (white) vertex $v$  of degree~$4$ in $H$.
We note that immediately before Edge-hitter plays~$v$, $\Delta^*(H)
= \Delta(H) = 4$. If the degree of a vertex $x$ is decreased by
$\ell$ after the vertex~$v$ is played, then the weight $\w(x)$ of
$x$ decreases by at least $7\ell$. When \Eh plays~$v$, four white
edges and the white vertex $v$ are recolored red. Further,
 $\dsum{v}$
 decreases by exactly~$12$.
Therefore, this move results in~$\w_1 \ge 4\cdot 852+ 1 \cdot 852 +
12 \cdot 7 = 4344 > 3024 \cdot 1$.
 In the next turn Staller plays a vertex, $u$ say.
We note that immediately before Staller makes her move, $\Delta^*(H)
= 4$. Staller's move results in the vertex $u$ recolored red, and so
the weight $\w(u)$ of $u$ decreases by at least $831$. Her move also
results in at least one edge recolored red. Further,
 $\dsum{u}$
 decreases by at least~$3$. Thus, Staller's
move $m_2$ decreases the weight by at least~$831 + 852 + 3 \cdot 7 =
1704$. Therefore, $\w_1 + \w_2 \ge 4344 + 1704 = 6048 = 3024 \cdot
2$, and so Inequality~(\ref{Eq2}) is satisfied with $k =
2$.~\smallqed
\end{proof}

By Claim~\ref{4unif}.B, we may assume $\Delta(H) \le 3$, for otherwise Edge-hitter can achieve a $3024$-target.

\begin{unnumbered}{Claim~\ref{4unif}.C}
If $\Delta(H) =3$, then Edge-hitter can achieve a $3024$-target.
\end{unnumbered}
\begin{proof}
Suppose that $\Delta(H) = 3$ and \Eh plays a (yellow) vertex $v$ of
degree~$3$ in $H$. Then, before Edge-hitter and Staller make  their
next moves, $\Delta^*(H) = 3$.  If the degree of a vertex $x$ is
decreased by $\ell$,   the weight  of $x$ decreases by at least
$95\ell$. When \Eh plays~$v$, three white edges and the yellow
vertex $v$ are recolored red. Further,
  $\dsum{v}$
 decreases by exactly~$9$. Therefore, playing the
vertex~$v$ results in~$\w_1 \ge 3 \cdot 852 + 1 \cdot 845 + 95 \cdot
9 = 4256 > 3024 \cdot 1$.
 In the next turn Staller plays a vertex, $u$ say. Staller's move
results in the vertex $u$ recolored red, and so the weight of $u$
decreases by at least $655$. Her move also results in at least one
edge recolored red. Further,
 $\dsum{u}$
decreases by at least~$3$. Thus, Staller's move $m_2$ decreases the
weight by at least~$655 + 852 + 3 \cdot 95 = 1792$. Therefore, $\w_1
+ \w_2 \ge 4256 + 1792 = 6048 = 3024 \cdot 2$, and so
Inequality~(\ref{Eq2}) is satisfied with $k = 2$.~\smallqed
\end{proof}

By Claim~\ref{4unif}.C, we may assume $\Delta(H) \le 2$, for otherwise Edge-hitter can achieve a $3024$-target.

\begin{unnumbered}{Claim~\ref{4unif}.D}
If $\Delta(H) = 2$, and $H$ contains two overlapping edges or $H$ contains a green vertex with a blue neighbor, then \Eh can achieve a $3024$-target.
\end{unnumbered}
\begin{proof}
Suppose that $\Delta(H) = 2$.  Then, before Edge-hitter and Staller
make their next moves, $\Delta^*(H) = 2$. If the degree of a vertex
$x$ is decreased by $\ell$ after the vertex~$v$ is played, then the
weight  of $x$ decreases by at least $207 \ell$.

Suppose firstly that $H$ contains two overlapping edges;
 say $u$ and $v$ are two vertices common to them.
 \Eh now plays the
  vertex $v$. The two edges incident with $v$ are recolored
red, as are both green vertices $u$ and $v$. Further,
  $\sum_{w\in N(v)\setminus\{u\}} d_H(w)$
 decreases by~$4$.
Therefore, playing the vertex~$v$ results in~$\w_1 \ge 2 \cdot 852 +
2 \cdot 750 + 4 \cdot 207 = 4032 > 3024 \cdot 1$.
 In the next turn Staller plays a vertex,  $w$ say.
Staller's move results in the vertex $w$ and at least one edge
recolored red. The degree sum of the neighbors of $w$ decreases
 by at least~$3$. Thus, Staller's move $m_2$ decreases the weight
by~$\w_2 \ge 543 + 852 + 3 \cdot 207 = 2016$. Therefore, $\w_1 +
\w_2 \ge 4032 + 2016 = 6048 = 3024 \cdot 2$, and so \Eh achieves a
$3024$-target.

Suppose secondly that $H$ contains a green vertex, $v$, having a
blue neighbor, $w$.  \Eh now plays the vertex $v$ which results in
$\w_1 \ge 750 + 543 + 2 \cdot 852 + 5 \cdot 207 = 4032 > 3024 \cdot
1$, since after the move is played, the green vertex~$v$ and its
blue neighbor $w$ are recolored red, and the two white edges
incident with $v$ are recolored red. Further,
 the degree sum of the neighbors of $v$ different from~$w$
 decreases by~$5$. Therefore,
analogously as before, Inequality~(\ref{Eq2}) is satisfied with $k =
1$ or $k=2$.~\smallqed
\end{proof}

By Claim~\ref{4unif}.D, we may assume that every component of the
 residual hypergraph $H$ is either a $2$-regular, linear hypergraph
  or an isolated edge (consisting of
 four  blue vertices),
   for otherwise Edge-hitter can achieve a $3024$-target.

\begin{unnumbered}{Claim~\ref{4unif}.E}
If there is an isolated edge in $H$, then \Eh can achieve a $3024$-target.
\end{unnumbered}
\begin{proof}
 If the assumption holds, \Eh can play a vertex from an isolated edge, what
 results in
$\w_1 = 4 \cdot 543 + 852 = 3024 = 3024 \cdot 1$.
 In the next turn Staller plays either a vertex from an
isolated edge and $\w_2 = 3024$, or a vertex from a $2$-regular,
linear component. In the latter case, two white edges and the played
green vertex are recolored red, and  further six green vertices are
recolored blue, implying that $\w_2 = 750 + 2 \cdot 852 + 6 \cdot
207 = 3696$. In both cases, $\w_1 + \w_2 \ge 6048 = 3024 \cdot 2$,
and so Inequality~(\ref{Eq2}) is satisfied with $k = 2$.~\smallqed
\end{proof}


 By Claim~\ref{4unif}.E, we may assume that every component of
the residual hypergraph $H$ is a $2$-regular, linear hypergraph, for
otherwise Edge-hitter can achieve a $3024$-target.
 \Eh now selects a component $C$ of $H$, and will play inside $C$
  as long as a green vertex in $V(C)$ exists,
independently of Staller's responses to his moves.
 More explicitly, among all green vertices in
$V(C)$ at each stage of the game, \Eh plays a green vertex with as
many blue neighbors as possible. We note that subsequent to his
first move, as long as a green vertex in $V(C)$ exists, \Eh can play
a green vertex having at least one blue neighbor.

Suppose that a total of $s$ green vertices in $V(C)$ are played by Edge-hitter. The first move, $m_1$, of \Eh results in $\w_1 = 750 + 2 \cdot 852 + 6 \cdot 207 = 3696$. Thereafter, each subsequent move $m_{2j+1}$ of Edge-hitter, where $j \in \{1,\ldots,s - 1\}$, results in $\w_{2j+1} \ge 750 + 2 \cdot 852 + 543 + 5 \cdot 207 = 4032$.
Every move played by Staller decreases the weight by at least~$543 + 852 + 3 \cdot 207 = 2016$, since with each of her moves at least one edge and a vertex are recolored red,
 and the degree sum of the remaining vertices is decreased by at least~$3$. In particular, each of Staller's moves $m_{2j}$, where $j \in \{1,\ldots,s-1\}$, results in $\w_{2j} \ge 2016$. If the game is complete after Edge-hitter's $s$th move, then

\[
\begin{array}{lcl}
\displaystyle{ \sum_{j=1}^{2s-1} \w_j} & =
& \displaystyle{ \sum_{j=0}^{s-1} \w_{2j+1} + \sum_{j=1}^{s - 1} \w_{2j} } \1 \\
& \ge & \displaystyle{ 3696 + 4032(s - 1)  + 2016(s - 1) }  \1 \\
& = & 3696 +  3024 \cdot 2(s - 1) \1 \\
& > & 3024 \cdot (2s - 1).
\end{array}
\]

\noindent
Thus, Inequality~(\ref{Eq2}) is satisfied with $k = 2s - 1$ and the game is completed after move $m_k$. Hence, we may assume that the game is not complete after Edge-hitter's $s$th move.

\begin{unnumbered}{Claim~\ref{4unif}.F}
If there are no green vertices in $V(C)$ after Edge-hitter's $s$th move, then \Eh can achieve a $3024$-target.
\end{unnumbered}
\begin{proof} Suppose that after Edge-hitter's $s$th move, which is the $(2s-1)$st turn in the game, all vertices in $V(C)$ in the resulting residual hypergraph are colored blue. Let $v$ be the vertex played by \Eh in his $s$th move, and let $e_1$ and $e_2$ be the two edges incident with~$v$. We show that $v$ had at least two blue neighbors. Suppose, to the contrary, that $e_1 \cup e_2$ contains only one blue vertex, say $u \in e_1$, before \Eh plays the vertex~$v$. We now consider a vertex, $u'$, from $e_1$ that is different from $u$ and $v$. We note that $u'$ is a green vertex. Let $e'$ be the edge incident with $u'$ that is different from $e$. After Edge-hitter's $s$th move, all remaining vertices are colored blue. In particular, the three vertices in $e' \setminus \{u'\}$ are all colored blue.
 Moreover, by the linearity of $C$, at most one of them belongs to $e_1 \cup e_2$. Hence, before Edge-hitter's $s$th move, the green vertex $u'$ had at least two blue neighbors
 (in fact at least three together with $u$, but we don't need this now),
 which contradicts the rule that \Eh plays a green vertex with the largest number of blue neighbors. Therefore, the vertex $v$ had at least two blue neighbors. Thus, $\w_{2s-1} \ge 750 + 2 \cdot 852 + 2 \cdot 543 + 4 \cdot 207 = 4368$. Staller's $s$th move results in $\w_{2s} \ge 2016$. Hence,
\[
\begin{array}{lcl}
\displaystyle{ \sum_{j=1}^{2s} \w_j} & =
& \displaystyle{ \sum_{j=0}^{s-1} \w_{2j+1} + \sum_{j=1}^{s} \w_{2j} } \1 \\
& \ge & \displaystyle{ ( 3696 + 4032(s - 2) + 4368)
 + 2016s }  \1 \\
& = & 3024 \cdot (2s).
\end{array}
\]
Thus, Inequality~(\ref{Eq2}) is satisfied with $k = 2s$.~\smallqed
\end{proof}

\medskip
By Claim~\ref{4unif}.F, we may assume that there is at least one green vertex in $V(C)$ after Edge-hitter's $s$th move, but after Staller's $s$th move there are no green vertices in $V(C)$, for otherwise Edge-hitter can achieve a $3024$-target. Necessarily, Staller's $s$th move plays a vertex from $V(C)$.

\begin{unnumbered}{Claim~\ref{4unif}.G}
If Staller's $s$th move does not play a blue vertex with three green neighbors, then \Eh can achieve a $3024$-target.
\end{unnumbered}
\begin{proof}
Under the given assumptions,
 Staller's $s$th move either plays a green vertex, in which case $\w_{2s} \ge 750 + 2 \cdot 852 + 6 \cdot 207 = 3696$, or plays a blue vertex having a blue neighbor, in which case $\w_{2s} \ge 2 \cdot 543 + 852 + 2 \cdot 207 = 2352$. In both cases, $\w_{2s} \ge 2352$, implying that
\[
\begin{array}{lcl}
\displaystyle{ \sum_{j=1}^{2s} \w_j} & =
& \displaystyle{ \sum_{j=0}^{s-1} \w_{2j+1} + \sum_{j=1}^{s} \w_{2j} } \1 \\
& \ge & \displaystyle{ (3696 +  3024 \cdot 2(s - 1))
 + 2352 }  \1 \\
& = & 3024 \cdot (2s).
\end{array}
\]
Thus, Inequality~(\ref{Eq2}) is satisfied with $k = 2s$.~\smallqed
\end{proof}

By Claim~\ref{4unif}.G, we may assume that Staller's $s$th move plays a blue vertex, $v$, with three green neighbors, say $u_1$, $u_2$, and $u_3$. Let $e$ be the edge incident with~$v$, and let $e_i$ be the edge incident with $u_i$ that is different from $e$ for $i=1,2,3$.
 Since no green vertices remain after Staller plays the vertex~$v$, we note that the three edges $e_1$, $e_2$ and $e_3$ are vertex-disjoint. Further, immediately before \St plays her $s$th move, the vertex set $S = e_1 \cup e_2 \cup e_3 \cup \{v\}$ contains exactly ten blue vertices and three green vertices. In the $(2s-1)$st turn, \Eh played as his $s$th move a green vertex of degree~$2$ which is not  contained in $S$, and therefore his move recolored at most six vertices in $S$ from green to blue. Thus, before the
$(2s-1)$st turn, the set $S$ contained at least four blue vertices and, by the Pigeonhole Principle, at least one of the vertices $u_1$, $u_2$, $u_3$ had at least two blue neighbors. According to Edge-hitter's rule, on the $(2s-1)$st turn when he played his $s$th move, he therefore selected a green vertex with at least two blue neighbors, implying that $\w_{2s-1} \ge 750 + 2 \cdot 852 + 2 \cdot 543 + 4 \cdot 207 = 4368$. Staller's $s$th move results in $\w_{2s} \ge 2016$. Hence,
\[
\begin{array}{lcl}
\displaystyle{ \sum_{j=1}^{2s} \w_j} & =
& \displaystyle{ \sum_{j=0}^{s-1} \w_{2j+1} + \sum_{j=1}^{s} \w_{2j} } \1 \\
& \ge & \displaystyle{ ( 3696 + 4032(s - 2) + 4368)
 + 2016s }  \1 \\
& = & 3024 \cdot (2s).
\end{array}
\]
Thus, Inequality~(\ref{Eq1}) is satisfied with $k = 2s$. This completes the proof of Theorem~\ref{4unif}.~\qed
\end{proof}

As an immediate consequence of Theorem~\ref{4unif}, we have that if $H$ is a $4$-uniform hypergraph, then
$3024 \dstart(H) \le 852\nH + 852\mH$, and so Theorem~\ref{4uniform} is an immediate consequence of Theorem~\ref{4unif}. Recall the statement of Theorem~\ref{4uniform}.

\noindent \textbf{Theorem~\ref{4uniform}}. \emph{If $H$ is a $4$-uniform hypergraph, then  $\tau_g(H) \le
\frac{71}{252}(\nH + \mH)$.
}

From the proof of Theorem~\ref{4unif} we also derive:

\begin{cor}
If $H$ is a $4$-uniform hypergraph and $\Delta(H) \le 2$, then
  $\dstart(H) \le \frac{7}{18}\nH$, moreover
  $\dstart(H) \le \frac{7}{9}\mH$ if $H$ is 2-regular.
\label{c:4-unif-Delta2}
\end{cor}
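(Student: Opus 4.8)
The plan is to mirror the proof of Corollary~\ref{c:Delat2} for the $3$-uniform case: read off the appropriate entries of Table~2, invoke the key inequality $3024\,\dstart(H) \le \w(H)$ from Theorem~\ref{4unif}, and combine it with the degree constraint forced by $4$-uniformity together with $\Delta(H) \le 2$. No genuine game-theoretic argument remains, since all of that labour is already contained in Theorem~\ref{4unif}; the corollary is a substitution.

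First I would observe that Edge-hitter is the first to move on $H$ and $\Delta(H) \le 2$, so $\Delta^*(H) = \Delta(H) \le 2$, and hence the initial weight $\w(H)$ is computed using the last column of Table~2. In this regime there are no white or yellow vertices, so every vertex is green (degree~$2$, weight~$750$), blue (degree~$1$, weight~$543$), or red/isolated (weight~$0$), while every edge has weight~$852$. Since $750 \ge 543$ and the number of green and blue vertices together is at most $\nH$, this yields $\w(H) \le 750\nH + 852\mH$. Combining with Theorem~\ref{4unif} gives $3024\,\dstart(H) \le \w(H) \le 750\nH + 852\mH$.

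Next I would bring in $4$-uniformity through an incidence count: $4\mH = \sum_{v \in V(H)} d_H(v) \le 2\nH$ because $\Delta(H) \le 2$, so $\mH \le \tfrac{1}{2}\nH$. Substituting, $3024\,\dstart(H) \le 750\nH + 852 \cdot \tfrac{1}{2}\nH = 1176\nH$, and since $1176/3024 = 7/18$ this gives $\dstart(H) \le \frac{7}{18}\nH$. When $H$ is $2$-regular the incidence count is tight, $\mH = \tfrac{1}{2}\nH$, equivalently $\nH = 2\mH$, so $\frac{7}{18}\nH = \frac{7}{9}\mH$ and the second bound follows immediately.

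The one point requiring care—and hence the only real obstacle—is the choice of column in Table~2. Because the vertex weights decrease as $\Delta^*(H)$ decreases, it is precisely the $\Delta^*(H) \le 2$ column that gives the tightest (smallest) weights, and it is exactly this column that governs $\w(H)$ at Edge-hitter's opening move, where $\Delta^*(H) = \Delta(H) \le 2$. Once that identification is made, the remainder is the routine substitution carried out above.
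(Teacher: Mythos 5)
Your proof is correct and follows essentially the same route as the paper: both apply Theorem~\ref{4unif} with the $\Delta^*(H)\le 2$ column of Table~2 to get $3024\,\dstart(H)\le \w(H)\le 750\nH+852\mH$, then use the incidence bound $\mH\le\frac{1}{2}\nH$ (tight in the $2$-regular case) to conclude. Your explicit justification that $\Delta^*(H)=\Delta(H)$ at Edge-hitter's opening move is a welcome clarification the paper leaves implicit, but it changes nothing substantive.
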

\begin{proof} If $H$ is $4$-uniform and has $\Delta(H) \le 2$, then
 $\mH \le \frac{1}{2} \nH$.
Recall that, under the assumption $\Delta^*(H) \le 2$,
 the weight of a green vertex is 750,
 and that of a white edge is 852.
Thus, by Theorem~\ref{4unif}, we obtain:
 $$
   3024\dstart(H) \le \textrm{w}(H) \le 750\nH + 852\mH
     \le 750\nH + 426\nH = 1176\nH .
 $$
This means $\dstart(H)\le \frac{7}{18}\nH$, which is
 precisely $\frac{7}{9}\mH$ if $H$ is 2-regular and 4-uniform.~\qed
\end{proof}

\medskip
For the Staller-start game, Theorem~\ref{4uniform} has
 the following further consequence.

\begin{cor}
If $H$ is a $4$-uniform hypergraph, then $\sstart(H) \le \frac{1}{252}(71 \nH + 71 \mH + 110)$.
\end{cor}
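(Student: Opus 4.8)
The plan is to mimic exactly the reduction used for the Edge-hitter-start game in Corollary~\ref{cor-3uniform}, which derived a Staller-start bound from the corresponding Edge-hitter bound by accounting for Staller's first move. The statement to prove is that if $H$ is a $4$-uniform hypergraph, then $\sstart(H) \le \frac{1}{252}(71\nH + 71\mH + 110)$, and I expect it to follow as a direct consequence of Theorem~\ref{4uniform} with one extra step at the front.

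First I would observe that in the Staller-start game, Staller makes the opening move, and every legal move deletes at least one vertex (the vertex played, which becomes red/isolated in the residual hypergraph) and at least one edge (the edge it newly hits). Hence Staller's first move decreases $\nH + \mH$ by at least~$2$. Let $H'$ be the residual hypergraph obtained after Staller's first move; then $\nHp + \mHp \le \nH + \mH - 2$.

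After Staller's opening move, it is Edge-hitter's turn on the residual hypergraph $H'$, so the remaining number of turns is exactly $\dstart(H')$, giving $\sstart(H) = 1 + \dstart(H')$. Now I would invoke Theorem~\ref{4uniform} applied to $H'$ (noting that $H'$ is still $4$-uniform, as deleting covered edges and isolated vertices preserves uniformity) to get $\dstart(H') \le \frac{71}{252}(\nHp + \mHp)$. Substituting the bound $\nHp + \mHp \le \nH + \mH - 2$ and chaining the inequalities yields
\[
\sstart(H) \le 1 + \tfrac{71}{252}(\nH + \mH - 2) = \tfrac{252 + 71(\nH+\mH) - 142}{252} = \tfrac{1}{252}(71\nH + 71\mH + 110),
\]
which is precisely the claimed bound.

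**The only point requiring care** is the justification that $\dstart(H')$ genuinely counts the remaining turns; strictly speaking $H'$ is a residual hypergraph rather than an arbitrary $4$-uniform hypergraph with no prior moves, but since Theorem~\ref{4uniform} (via Theorem~\ref{4unif}) is stated for residual hypergraphs and its weight bound $3024\,\dstart \le \w$ specializes to $\frac{71}{252}(\nH+\mH)$ whenever the residual hypergraph is genuinely $4$-uniform with all vertices of the full weight, the argument goes through. I do not anticipate any real obstacle here, as this corollary is formally identical in structure to Corollary~\ref{cor-3uniform} with the constant $\frac{5}{16}$ replaced by $\frac{71}{252}$; the whole proof is a three-line computation.
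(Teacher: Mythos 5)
Your proposal is correct and coincides with the paper's own proof: Staller's first move removes at least one vertex and one edge, so $\nHp + \mHp \le \nH + \mH - 2$, and then $\sstart(H) = 1 + \dstart(H') \le 1 + \frac{71}{252}(\nH + \mH - 2)$ gives the bound exactly as in the paper. Your cautionary remark about $H'$ being a residual hypergraph is handled implicitly in the paper for the same reason you note, namely that Theorem~\ref{4unif} is stated for residual hypergraphs and specializes to the claimed bound.
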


\begin{proof} The first move of \St decreases $\nH + \mH$  by at least~$2$, since at least
 one vertex and one edge are deleted by her move. Let $H'$ denote the resulting residual hypergraph. Then $\nHp + \mHp \le \nH + \mH - 2$. By Theorem~\ref{4uniform},
\[
\begin{array}{lcl}
\sstart(H) & = & 1 + \dstart(H') \1 \\
&  \le & 1 + \frac{71}{252}(\nHp + \mHp) \1 \\
&  \le & 1 + \frac{71}{252}(\nH + \mH - 2) \1 \\
& = & \frac{1}{252}(71\nH + 71\mH + 110). \hspace*{0.5cm} \Box
\end{array}
\]
\end{proof}

\section*{Acknowledgements}

 Research of the first and third author is supported by the
 Hungarian Scientific Research Fund NKFIH/OTKA
under the grant SNN 116095. Research of the second author is
supported in part by the South African National Research Foundation
and the University of Johannesburg.

\medskip

\end{document}